\theoremstyle{definition}
\newtheorem{theorem}{Theorem}[section]
\newtheorem{lemma}[theorem]{Lemma}
\newtheorem{corollary}[theorem]{Corollary}
\newtheorem{proposition}[theorem]{Proposition}
\newtheorem{definition}[theorem]{Definition}
\newtheorem{example}[theorem]{Example}
\newtheorem{remark}[theorem]{Remark}
\newtheorem{question}[theorem]{Question}
\newcommand{\ZZ}{\mathbf{Z}}
\newcommand{\NN}{{\mathbf{N}}}
\newcommand{\Hom}{\textrm{Hom}}
\newcommand{\HHom}{\underline{\textrm{Hom}}}
\newcommand{\IIsom}{\underline{\textrm{Isom}}}
\newcommand{\EExt}{\underline{\textrm{Ext}}}
\newcommand{\Der}{\textrm{Der}}
\newcommand{\dd}[1]{\dh_{#1}}
\newcommand{\EExal}{\underline{\textrm{Exal}}}
\newcommand{\DDef}{\underline{\textrm{Def}}}
\newcommand{\Exal}{\textrm{Exal}}
\newcommand{\Ext}{\textrm{Ext}}
\newcommand{\hh}[1]{\mathcal{h}_{#1}}
\newcommand{\pb}{{\arrow[dr, phantom, very near start, "\ulcorner"]}}
\newcommand{\longequals}{\xlongequal{\: \:}}
\newcommand{\Split}{\underline{\textrm{Split}}}
\newcommand{\ccx}[1]{\mathbf{L}_{#1}}
\renewcommand{\tilde}[1]{\ensuremath{\widetilde{#1}}}
\newcommand{\cal}[1]{\ensuremath{\mathcal{#1}}}
\newcommand{\onlyinsubfile}[1]{#1}
\newcommand{\notinsubfile}[1]{}
\newcommand{\longsimeq}{\overset{\sim}{\longrightarrow}}
\definecolor{sebgreen1}{rgb}{0.019,0.317,0.149}
\definecolor{sebgreen2}{rgb}{0.784,0.952,0.780}
\newcommand{\scr}[1]{{\ensuremath{\mathscr{#1}}}}
\newcommand{\Aalg}{A-{\rm alg}}
\newcommand{\Amod}{A-{\rm mod}}
\newcommand{\AAalg}{A'-{\rm alg}}
\let\@wraptoccontribs\wraptoccontribs
\renewcommand{\bar}[1]{\overline{#1}}
\title[Deformations of Algebras]{Deformations of Algebras with 2-Extensions}
\author[Leo Herr]{Leo Herr \\ Appendix joint with Jonathan Wise}
\date{\today}
\address{University of Utah}
\email{herr@math.utah.edu}
\address{University of Colorado Boulder}
\begin{document}

\maketitle

\renewcommand{\onlyinsubfile}[1]{}
\renewcommand{\notinsubfile}[1]{#1}

\setcounter{section}{-1}

\begin{abstract}
We apply the notion of 2-extensions of algebras to the deformation theory of algebras. After standard results on butterflies between 2-extensions, we use this (2, 0)-category to give three perspectives on the deformation theory of algebras. We conclude by fixing an error in the literature. 
\end{abstract}

\section{Introduction}

This paper
\begin{itemize}
    \item sets 2-extensions of algebras on the same footing as that of modules \cite{mymoduledefms}, \cite{wisebutterflies},
    \item obtains the theorems of \cite{mymoduledefms} about modules for algebras,
    \item fixes a subtle flaw (Example \ref{freeproductnoiso}) in \cite{wisealgs1}.
\end{itemize}

Fix a squarezero extension of sheaves of commutative rings
\[0 \to I \to A' \to A \to 0,\]
an $A$-algebra $B$, a $B$-module $M$, and a map $\varphi : I \to M$, all in a topos $E$. 
A \textit{deformation} of $B$ is a commutative diagram 
\[\begin{tikzcd}
0 \ar[r] &I \ar[r] \ar[d, "\varphi", swap] &A' \ar[r] \ar[d, dashed]  &A \ar[r] \ar[d]  &0 \\
0 \ar[r] &M \ar[r, dashed] &B' \ar[r, dashed] &B \ar[r] &0.
\end{tikzcd}\]

\begin{question}[{\cite{gerstenhaber1}, \cite[\S 2.1.1]{illusie1}}]\label{q:howmanydefms?}
Do deformations $B'$ exist? If so, how many deformations are there? What is the structure of the set of all deformations $B'$? 
\end{question}

Write $\DDef(-, \varphi, M)$ for the fibered category over $\Aalg/B$ of deformations $B'$. J. Wise introduced a topology on the category $\Aalg/B$ of $A$-algebras over $B$ in which $\DDef(-, \varphi, M)$ forms a gerbe \cite[\S 4.1]{wisealgs2}. 

This gerbe is banded by the sheaf of $A$-derivations
\[\dd{M}(C) := \Der_A(C, M),\]
yielding a class
\[\DDef(-, \varphi, M) \in H^2(\Aalg/B, \dd{M}).\]
This class vanishes if and only if there is a solution $B'$ to Question \ref{q:howmanydefms?}. 

Wise previously computed that the cohomology of $\dd{M}$ on the site $\Aalg$ gives algebra extensions
\[\Exal_A(B, M) \simeq H^1(\Aalg/B, \dd{M})\]
of $B$ by $M$. An $A$-algebra extension $\xi$ of $B$ by $M$ corresponds to the $\dd{M}$-torsor of its splittings. 

\begin{question}
Is there an analogous notion of 2-extensions of algebras $\Exal^2_A(B, M)$ such that
\[\Exal^2_A(B, M) \simeq H^2(\Aalg/B, \dd{M})?\]
If so, which 2-extension corresponds to $\DDef(-, \varphi, M)$?
\end{question}

2-extensions of algebras were defined in \cite{gerstenhaber2} and \cite{lichtenbaumschlessinger}, endowing 2-extensions of abelian groups
\[\xi : \qquad 0 \to M \to N \to R \to B \to 0\]
with appropriate multiplicative structure. They took morphisms to be multiplicative chain maps. We want splittings of $\xi$ to form a gerbe under $\dd{M}$, but as such they do not. 

We define a $(2, 0)$-category $\EExal_A^2(B, M)$ to have 2-extensions with $A$-algebra structure \S \ref{ss:algstr} as objects, but \emph{butterflies} between them as 1-morphisms \cite{butterfliesoriginalnoohialdrovandi}. Splittings of $\xi$ now form a $\dd{M}$-gerbe by Proposition \ref{prop:isomdMgerbe}. Writing $\Exal^2_A(B, M)$ for the group of isomorphism classes, we obtain the expected isomorphism
\[\Exal_A^2(B, M) \simeq H^2(\Aalg/B, \dd{M})\]
by adapting work of Wise:

\begin{proposition}[{\cite[Theorem 2]{wisealgs1}}, {\cite[Lemma 4.2]{wisealgs2}}]\label{prop:explicitcohomisexal}
There are isomorphisms 
\[\Exal_A(B, M) \simeq H^1(\Aalg/B, \dd{M})\]
\[\Exal_A^2(B, M) \simeq H^2(\Aalg/B, \dd{M})\]
sending algebra extensions $\gamma$ and 2-extensions $\xi$ to their $\dd{M}$-torsor/gerbe of splittings. 
\end{proposition}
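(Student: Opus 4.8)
The plan is to reduce both isomorphisms to Giraud's classical interpretation of low-degree cohomology of an abelian sheaf: for an abelian group object $G$ on a site $\mathcal{C}$, the group $H^1(\mathcal{C}, G)$ is canonically the group of isomorphism classes of $G$-torsors, while $H^2(\mathcal{C}, G)$ is the group of equivalence classes of gerbes banded by $G$. Taking $\mathcal{C} = \Aalg/B$ and $G = \dd{M}$, it then suffices to promote the assignment ``extension $\mapsto$ its torsor/gerbe of splittings'' to an equivalence between $($2$)$-extensions and torsors/gerbes, compatibly with the group structures, so that the Baer sum of extensions matches the addition of torsor/gerbe classes.

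For the degree-one isomorphism I would argue as in \cite[Theorem 2]{wisealgs1}. Given an $A$-algebra extension $\gamma : 0 \to M \to B' \to B \to 0$, I assign to each object $C \to B$ of $\Aalg/B$ the set of $A$-algebra sections $C \to B'$ over $B$. Since $M$ is square-zero in $B'$, the difference of two such sections lies in $\Der_A(C, M) = \dd{M}(C)$, so these sections form a $\dd{M}$-pseudotorsor; it is a genuine torsor precisely because the covers of Wise's topology are those over which such sections exist. This gives a homomorphism $\Exal_A(B, M) \to H^1(\Aalg/B, \dd{M})$ carrying Baer sums to sums of torsor classes, and I would check bijectivity by recovering $B'$ from the total space of its torsor of splittings, using that the identity $B \to B$ is the terminal object of the site.

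The degree-two isomorphism is obtained by carrying the same argument one categorical level up. Proposition \ref{prop:isomdMgerbe} already supplies the forward functor: splittings of a $2$-extension $\xi : 0 \to M \to N \to R \to B \to 0$ form a gerbe banded by $\dd{M}$, so $\xi \mapsto [\text{splittings of } \xi]$ defines a map $\Exal^2_A(B, M) \to H^2(\Aalg/B, \dd{M})$ sending the Baer-type sum of $2$-extensions to the sum of gerbe classes. To see it is an isomorphism I would show that ``splittings'' is an equivalence of $(2,0)$-categories from $\EExal^2_A(B, M)$ onto the $(2,0)$-category of gerbes banded by $\dd{M}$, and then pass to isomorphism classes. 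Essential surjectivity amounts to reconstructing a $2$-extension from a gerbe, mirroring the degree-one total-space construction but now reassembling the two short exact sequences $0 \to M \to N \to K \to 0$ and $0 \to K \to R \to B \to 0$ (with $K = \im(N \to R)$) from the gerbe data.

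The main obstacle, and the reason the $(2,0)$-categorical framework is needed, lies in full faithfulness in degree two: I must match \emph{butterflies} between $2$-extensions with morphisms of the associated gerbes. This is exactly where multiplicative chain maps fail and butterflies succeed, the butterfly between $\xi$ and $\xi'$ being designed so as to induce an equivalence of the gerbes of splittings, with every gerbe morphism realized by a butterfly. I would treat this correspondence with care, since it is precisely the point at which the subtle flaw of Example \ref{freeproductnoiso} enters: one must verify that the intended isomorphisms of $2$-extensions are genuinely available, and not merely the naive chain-level ones, so that the functor descends to isomorphism classes and the resulting map to $H^2(\Aalg/B, \dd{M})$ is a bijection.
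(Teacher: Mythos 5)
Your forward map and your degree-one argument match the paper's framing: splittings as a $\dd{M}$-torsor/gerbe, Proposition \ref{prop:isomdMgerbe} for the gerbe structure, and Giraud's classification of torsors and banded gerbes; the degree-one case is essentially Wise's Theorem 2, which the paper simply cites. The genuine gap is in degree two, exactly at essential surjectivity. You assert that a $2$-extension can be recovered from an arbitrary $\dd{M}$-gerbe by ``reassembling the two short exact sequences from the gerbe data,'' but a gerbe is an abstract fibered category: there is no ambient algebra from which to extract $N$, $R$, $K = \im(N \to R)$, or a crossed-ring structure. Any honest construction must trivialize the gerbe over a cover by free algebras and run a cocycle/descent argument, which in turn requires vanishing statements such as $H^2(\Aalg/A[S], \dd{M}) = 0$ for polynomial algebras. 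That is precisely the hard content of the proposition, and it is where the paper spends its effort: instead of reconstructing a $2$-extension from a gerbe, the paper obtains the second isomorphism by composing $H^p(\Aalg/B, \dd{M}) = T^p(B/A, M)$ (Theorem \ref{thm:algcohomisexal}, whose proof is the repaired \v Cech argument of the appendix resting on Theorem \ref{thm:injdervanishing}) with the Lichtenbaum--Schlessinger identification of $T^2$ with chain-map equivalence classes $Ex^2$, and then with Lemma \ref{lem:LSequiv=isom}, which identifies $Ex^2$ with butterfly isomorphism classes $\Exal^2_A(B, M)$. Until you either supply an actual gerbe-to-$2$-extension construction or route through this $T^2$ chain, the degree-two bijectivity is unproved.

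Two smaller points. First, injectivity is much cheaper than your full-faithfulness plan: the proposition concerns isomorphism classes only, so once the map is additive, injectivity follows from the observation that a global section of $\Split(\xi)$ \emph{is} by definition an isomorphism $\xi \simeq \bar 0$; you do not need to match every morphism of gerbes with a butterfly. Second, you have misplaced the flaw of Example \ref{freeproductnoiso}. It has nothing to do with chain maps versus butterflies: it is the failure of the free-algebra functor $A[-]$ to preserve fiber products, hence to be left exact, which invalidates the morphism of sites $\Aalg \dashrightarrow E^*$ used in Wise's original proof of the cohomology comparison --- in other words, it afflicts exactly the surjectivity input your sketch leaves unaddressed. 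Butterflies are needed for different reasons: so that splittings form a $\dd{M}$-gerbe (Proposition \ref{prop:isomdMgerbe}) and so that equivalence classes of $2$-extensions form the correct group (Lemma \ref{lem:LSequiv=isom}).
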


This allows us to rewrite the transitivity sequence of Illusie \cite[\S II.2.1]{illusie1} in terms of specific 2-extensions of algebras. We identify the 2-extension corresponding to $\DDef(-, \varphi, M)$ in \S \ref{ss:algdefms}.

Appendix \ref{a:fixflaw} fixes a flaw in the proof of the main theorem of \cite{wisealgs1}:

\begin{theorem}[=Theorem \ref{thm:algcohomisexal}]\label{introthm:algcohomisexal}
Cohomology of the sheaf $\dd{M}$ on $\Aalg/B$ corresponds to the Lichtenbaum-Schlessinger functors $T^0, T^1, T^2, \dots$;
i.e., to $\Ext$ with the cotangent complex: 
\[
\begin{split}
H^p(\Aalg/B, \dd{M}) &= \Ext^p_B(\ccx{B/A}, M)  \\
    &=T^p(B/A, M).
\end{split}
\]
\end{theorem}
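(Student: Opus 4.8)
The plan is to compute $H^p(\Aalg/B, \dd{M})$ by resolving the terminal object $B$ through a hypercovering built from polynomial $A$-algebras, and then to recognize the resulting complex as exactly the one that defines the cotangent-complex $\Ext$ groups. Concretely, I would first fix a simplicial resolution $P_\bullet \to B$ in which every $P_n = A[S_n]$ is a free (polynomial) $A$-algebra — for instance the canonical free simplicial resolution coming from the free/forgetful adjunction between sets and $A$-algebras. Viewed in $\Aalg/B$, this is a hypercovering of the terminal object $B$ for Wise's topology, the coskeletal comparison maps being covers since they are surjections presented by polynomial algebras.

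Second — and this is the crux — I would establish that $\dd{M}$ is acyclic on polynomial algebras: $H^q(\Aalg/P, \dd{M}) = 0$ for all $q > 0$ and every polynomial $A$-algebra $P$ over $B$, while $H^0(\Aalg/P, \dd{M}) = \Der_A(P, M)$. Granting this, the hypercover descent spectral sequence
\[E_1^{p,q} = H^q(\Aalg/P_p, \dd{M}) \Rightarrow H^{p+q}(\Aalg/B, \dd{M})\]
collapses onto the row $q = 0$, yielding
\[H^p(\Aalg/B, \dd{M}) = H^p\big(\dd{M}(P_\bullet)\big) = H^p\big(\Der_A(P_\bullet, M)\big).\]

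Third, I would identify this with $\Ext$ of the cotangent complex. Using $\Der_A(P_n, M) = \Hom_{P_n}(\Omega_{P_n/A}, M) = \Hom_B(\Omega_{P_n/A} \otimes_{P_n} B, M)$ together with the definition $\ccx{B/A} \simeq \Omega_{P_\bullet/A} \otimes_{P_\bullet} B$, the cosimplicial abelian group $\Der_A(P_\bullet, M)$ is, via Dold--Kan, the cochain complex $\Hom_B(\ccx{B/A}, M)$. Since each term $\Omega_{P_n/A} \otimes_{P_n} B$ is a free $B$-module, $\Hom_B(-, M)$ requires no further derivation, so
\[H^p\big(\Der_A(P_\bullet, M)\big) = \Ext^p_B(\ccx{B/A}, M) = T^p(B/A, M),\]
the last equality being the definition of the Andr\'e--Quillen / Lichtenbaum--Schlessinger functors.

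The main obstacle is the acyclicity claim in the second step, which is precisely the place where the proof of \cite[Theorem 2]{wisealgs1} went astray (compare Example \ref{freeproductnoiso}): one cannot naively compute $H^q(\Aalg/P, \dd{M})$ via the \v{C}ech nerve of a single cover, because free products of polynomial algebras fail to represent the expected fibre products, so the na\"ive \v{C}ech complex does not compute the derived functors. I expect to repair this by first verifying descent for $\dd{M}$ directly and then proving acyclicity on polynomial algebras, either by exhibiting a contracting homotopy on the relevant cosimplicial complexes or by comparing $\dd{M}|_{\Aalg/P}$ with a quasi-coherent-type sheaf whose higher cohomology vanishes because $P$ is ``affine'' for the topology. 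The low-degree cases $p \le 2$ should then be consistent with Proposition \ref{prop:explicitcohomisexal} and the $\dd{M}$-gerbe description of Proposition \ref{prop:isomdMgerbe}, providing a useful check.
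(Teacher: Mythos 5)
Your plan founders at exactly the step you flag as the crux, but not for a reparable reason: the acyclicity claim in your second step is \emph{false} in the generality of the paper, which works over an arbitrary topos $E$, not over $E=(Sets)$. Polynomial algebras on sheaves are not acyclic for $\dd{M}$ with arbitrary coefficients. Already the free algebra on the terminal sheaf fails: take $E = Sh(X)$ for a topological space $X$ and $P = A[x]$. An extension $0 \to M \to R \to A[x] \to 0$ splits locally in $E$ (lift the variable $x$ locally), and local splittings form a torsor under $\Der_A(A[x],M) \cong M$, so by Proposition \ref{prop:explicitcohomisexal}
\[
H^1(\Aalg/A[x], \dd{M}) \;=\; \Exal_A(A[x], M) \;\cong\; H^1(E, M) \;=\; H^1(X, M),
\]
which is nonzero for suitable $X$ and $M$. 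This is precisely the warning in the paper's remark on the topology: the local objects of $\Aalg$ are only the finite polynomial algebras $j_!j^*A[x_1,\dots,x_n]$ over local objects of $E$, whereas ``locally free'' algebras $A[S]$ on a sheaf $S$ may still have cohomology, namely $H^*(S,M) = \Ext^*_A(A^S, M)$. So no contracting homotopy and no ``affineness'' argument can establish your step two, because the statement itself is wrong. The same phenomenon sinks your step three: $\Omega_{P_n/A}\otimes_{P_n} B$ is a free $B$-module on a sheaf, hence flat but not projective, so $\Ext^p_B(\ccx{B/A}, M)$ is a hyper-Ext that must be computed against an injective resolution of $M$ and does not coincide with $H^p\bigl(\Hom_B(\ccx{B/A}, M)\bigr)$. (Over $(Sets)$ both of your claims are true and your outline is the standard Andr\'e--Quillen comparison; but then there would have been nothing to fix.)

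The statement that is true, needed, and proved in the appendix (Theorem \ref{thm:injdervanishing}) is vanishing for \emph{injective} coefficients only: $H^p(\Aalg/A[S], \dd{J}) = 0$ for $p \neq 0$ and $J$ an injective $A[S]$-module. That is exactly the input required to run the remainder of Wise's original argument --- a free simplicial resolution of $B$ played against an injective resolution of $M$ --- which the appendix leaves untouched. Note also that the flaw was not that ``the na\"ive \v{C}ech complex does not compute the derived functors,'' but that $A[-]$ fails to be left exact (Example \ref{freeproductnoiso}) and hence does not underlie a morphism of sites in the direction originally claimed. The repair uses instead that $A[-]$ is cocontinuous, and compares the \v{C}ech complex of a cover $\{T_i \to S\}$ in $E$ --- exact in positive degrees because $f^*\dd{J} = J$ is injective --- with that of $\{A[T_i] \to A[S]\}$ in $\Aalg$, the key points being that the comparison maps $A[T_{i_0}\times_S\cdots\times_S T_{i_p}] \to A[T_{i_0}]\times_{A[S]}\cdots\times_{A[S]}A[T_{i_p}]$ are covering (Proposition \ref{coveringfreeproduct}) though not isomorphisms, and that the face-map squares are cocartesian (Lemma \ref{lem:cechpushout}), so lifts of normalized cocycles descend from one \v{C}ech complex to the other. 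If you restrict your acyclicity claim to injective $J$ and incorporate the second (injective) resolution, your spectral sequence degenerates for the right reason and your argument becomes, in essence, the paper's proof.
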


Theorem \ref{thm:injdervanishing} lets one prove this theorem the same way. The appendix is independent, so we may use these identifications throughout the paper.

\subsection*{Conventions}

Fix a topos $E$. All algebraic structures are internal to $E$ -- even $\ZZ$ denotes the constant initial ring. All ``rings'' and ``algebras'' are commutative and unital unless otherwise specified. Let $A$ be a ring and $B$ an $A$-algebra.

\subsection*{Literature Summary}

Butterflies were introduced in their present form by the seminal \cite{butterfliesoriginalnoohialdrovandi} and its sequels. They appear in a different form in \cite{SGA7-I}. Splittings of a 2-extension appear a couple years earlier in \cite{gerstenhaber2} as ``solutions.'' Since any invertible butterfly $\xi \simeq \eta$ can be written as a splitting of $\xi-\eta$, one could even attribute their origin to Gerstenhaber. The author is not aware of an earlier source.

The 2-extensions we use appear as such in \cite{lichtenbaumschlessinger} and as ``admissible sequences'' in \cite{gerstenhaber2}. 

The middle two terms of a 2-extension are here called ``crossed rings'' $f : N \to R$. They were known as ``crossed modules'' in \cite{aldrovandianncats} and ``quasi-ideals'' in \cite{ringgroupoiddrinfeld}. Op. cit. shows that crossed rings are equivalent to DG rings concentrated in degrees $[-1, 0]$. Lemma 3.3.1 shows they are also equivalent to simplicial rings $\partial_0, \partial_1 : A_1 \rightrightarrows A_0$ concentrated in two degrees such that $\ker \partial_0 \cdot \ker \partial_1 = 0$ (taking $A_1 = R + N, A_0 = R$). These are all equivalent to the 1-category of ring groupoids. The butterflies defined here are described in \S 4. 

Our article does not overlap op. cit. much more than it does \cite{aldrovandianncats}, \cite{lichtenbaumschlessinger}, or \cite{gerstenhaber2}. Nevertheless, Drinfeld offers valuable context in a series of categorical equivalences. 

Though the above papers work primarily in the topos $E = (Sets)$, it is understood that minor modifications remove this restriction. 

A similar topology was introduced in \cite{wisealgs1}; an identical one was used in \cite{mymoduledefms}. 

This paper applies insights of Wise, Illusie, Gerstenhaber, Lichtenbaum, Schlessinger, Aldrovandi, Noohi, Grothendieck, Drinfeld, etc. to the deformation theory of algebras and 2-extensions in an elementary fashion.

\subsection*{Acknowledgments}

This article arose from an office discussion between the author, Benjamin Briggs, Josh Pollitz, and Daniel McCormick. All gracefully declined to be coauthors. The author was trying to guess the correct notion of 2-extension when B. Briggs told him it could be found in \cite{lichtenbaumschlessinger} and even \cite{gerstenhaber2}. This notion obtained the results of \cite{mymoduledefms} in the same fashion and restated some of those of \cite{wisealgs2}.

The authors of Appendix \ref{a:fixflaw} discovered the mistake in \cite{wisealgs1} while the first author was a graduate student of the second at the University of Colorado Boulder. Simplicial algebra can no longer be avoided, but sublimated into \v Cech cohomology. 

J. Wise pioneered this approach to deformation theory. The author is grateful to him in general, but also specifically for useful conversations and finding mistakes in drafts of the present article. 

The author is also grateful to the NSF for partial financial support through RTG Grant \#1840190.

\section{2-extensions}

This section defines a $(2, 0)$-category $\EExal^2_A(B, M)$ of 2-extensions of $A$-algebras with butterflies as 1-morphisms. 

A short exact sequence
\[\zeta : 0 \to M \to B' \to B \to 0\]
with $B' \to B$ $A$-algebras is called an \textit{extension of} $A$-{algebras} if $M^2 = 0$ in the multiplication of $B'$. Morphisms between such extensions are chain maps, defining a category $\EExal_A(B, M)$ of extensions of $B$ by $M$. 

An automorphism of $\zeta$ gives a derivation $B \to M$ \cite[4.3.4]{lichtenbaumschlessinger}. In a topology to be specified on $A$-algebras over $B$, isomorphisms of $\zeta$ with the trivial extension $B + \epsilon M$ form a torsor under derivations. We can identify extensions with sheaf cohomology 
\[H^1(\Aalg/B, \dd{M}) \simeq \EExal_A(B, M).\]

To explore $\dd{M}$-gerbes and move further down the long exact sequence of sheaf cohomology, we need a 2-category enriched over extensions of algebras. As for extensions of modules \cite{mymoduledefms}, \cite{wisebutterflies}, objects will be longer sequences
\[0 \to M \to N \to R \to B \to 0.\]
We need to generalize the condition $M^2=0$. 

\begin{definition}
Let $R$ be a ring and $N$ an $R$-module and non-unital commutative algebra. A \emph{crossed ring} $f : N \to R$ is a homomorphism of non-unital commutative algebras compatible with the $R$-module structure; for all $x, y \in N$,
\begin{equation}\label{eqn:crossedmod}
xy = f(x).y \qquad \in N.
\end{equation}
\end{definition}

Gerstenhaber said these maps $N \to R$ \emph{conform} \cite[Definition 4]{gerstenhaber2}. Any ideal $K \subseteq R$ is a crossed ring. If $R \to S$ is a map of rings, $N \otimes_R S \to S$ is also a crossed ring. This gives a left adjoint in the usual way. Beware that there need not be a map $S \dashrightarrow N \otimes_R S$ and this tensor product is not the pushout in the category of non-unital commutative algebras.

\begin{remark}

Crossed rings $f : N \to R$ are equivalent to $R$-module maps such that $f(x).y = f(y).x$. Given such a map, one can define a non-unital commutative algebra structure on $N$. Chain maps between such objects are multiplicative. Crossed rings were defined this way in \cite[Definition 2.1.1]{lichtenbaumschlessinger}, \cite[Definition 2.1.1]{aldrovandianncats}, and \cite[3.3.1]{ringgroupoiddrinfeld}. 

\end{remark}

\begin{definition}

After Gerstenhaber \cite[pg. 4]{gerstenhaber2}, a 2-extension $\xi$ is an exact sequence of groups
\begin{equation}\label{eqn:2extn}
\begin{tikzcd}
\xi:     &0 \ar[r]   &M \ar[r, "e"]      &N \ar[r, "f"]      &R \ar[r, "g"]      &B \arrow[r]      &0,
\end{tikzcd}
\end{equation}
where $g : R \to B$ is a surjection of rings, $f : N \to R$ is a crossed ring, and $e$ an $R$-module homomorphism. 

\end{definition}

Necessarily $M$ is a $B$-module and the product it inherits from $N$ is zero. The ``trivial'' 2-extension is given by
\begin{equation}\label{eqn:trivial2extn}
\begin{tikzcd}
\overline{0}: &0 \ar[r] &M \ar[r, equals] &M \ar[r, "0"] &B \ar[r, equals] &B \ar[r] &0.
\end{tikzcd}    
\end{equation}
We frequently use this notation \eqref{eqn:2extn} for a 2-extension without further mention, adding $'$s or $\tilde{\cdot}$s as necessary.

\begin{definition}

A \emph{butterfly} \cite{nlab:butterfly} between 2-extensions is a diagram
\begin{equation}\label{eqn:butterfly}
\begin{tikzcd}
\xi : &0 \ar[r]   &M \ar[r] \ar[dd]   &N\ar[dr] \ar[rr]  &&R \ar[r]     &B \ar[dd] \ar[r]  &0  \\
&    & & &Q \ar[ur] \ar[dr] \ar[d, phantom, "-"]  & & \\
\eta : &0 \ar[r]   &M'  \ar[r] &N'\ar[ur]  \ar[rr] &\phantom{a} &R' \ar[r]    &B' \ar[r] &0
\end{tikzcd}    
\begin{tikzpicture}[scale=.75]
\draw[->] (0, 0) to (0, .7);
\draw[->] (0, 0) to (.7, 0);
\draw[->] (0, 0) to (0, -.7);
\draw[->] (0, 0) to (-.7, 0);
\draw (0, 0) circle (.8em);
\node at (0, 1){N};
\node at (1, 0){E};
\node at (-1, 0){W};
\node at (0, -1){S};
\end{tikzpicture}
\end{equation}
such that 
\begin{enumerate}
    \item The $N'QR'$ triangle marked with ``$-$'' \emph{anticommutes}. The rest of the diagram commutes.
    \item\label{eitem:SWNEexact} the ``SW-NE'' diagonal sequence 
    \[0 \to N' \to Q \to R \to 0\]
    is exact. 
    \item $Q \to R, Q \to R'$ are maps of rings. 
    \item\label{eitem:crossmodcptibility} The maps $N \to Q$, $N' \to Q$ are crossed rings which are induced from their $R, R'$ crossed ring structures. 
    \item The composite $N \to Q \to R'$ is zero.
\end{enumerate}
\end{definition}

Such diagrams are abusively denoted by $Q : \xi \to \eta$. The anticommutativity convention comes from \cite[Equation (1)]{wisebutterflies}, and ensures composition of butterflies is naturally defined.

The composite of two analogously named butterflies $Q : \xi \to \xi', Q' : \xi' \to \xi''$ is
\[\begin{tikzcd}
0 \ar[r]  &M \ar[r] \ar[dd]  &N \ar[rr] \ar[dr]  &&R \ar[r]  &B \ar[r] \ar[dd]  &0 \\
 & & &(Q \times_{R'} Q')/N' \ar[ur] \ar[dr]\\
0  \ar[r] &M'' \ar[r]  &N'' \ar[rr] \ar[ur]  &\phantom{a} \ar[from=u, phantom, "-"]  &R'' \ar[r] &B'' \ar[r] &0.
\end{tikzcd}\]
with natural maps as in \cite[\S 3]{wisebutterflies}. A morphism \eqref{eqn:butterfly} is invertible if and only if it restricts to isomorphisms $M \longsimeq M', B \longsimeq B'$ \cite[Proposition 3]{wisebutterflies}. In that case, the inverse is the same butterfly upside-down up to signs. 

Maps $Q \to Q'$ between butterflies must commute with all the structure maps and are necessarily isomorphisms. Our labeling by NSEW conflicts with those writing butterflies left to right. 

Write $\EExal^2(B, M)$ for the 2-category with objects 2-extensions and morphisms butterflies that restrict to isomorphisms $M \longsimeq M'$, $B \longsimeq B'$. All such butterflies are invertible. The NW-SE sequence
\[0 \to N \to Q \to R' \to 0\]
is automatically exact by commutativity of Diagram \eqref{eqn:butterfly} and exactness of the SW-NE sequence \eqref{eitem:SWNEexact} \cite[Proposition 3]{wisebutterflies}. 

The category $\EExal^2(B, M)$ is therefore a (2, 0)-category. The diagonals need not be squarezero extensions of algebras.

Thinking of every object in \eqref{eqn:butterfly} as an abelian group, one obtains a forgetful functor $\EExal^2(B, M) \to \EExt^2(B, M)$.

If $N$ is a non-unital commutative algebra and $R$-module, write $R + N$ for the group $R \oplus N$ with multiplication
\[(r, n)(r', n') := (rr', rn' + r'n + nn').\]
A chain map between two 2-extensions $\xi$, $\eta$ respecting multiplication gives a butterfly \[\begin{tikzcd}
\xi :   &0 \ar[r]      &M \ar[r] \ar[dd]  &N  \ar[dr, "(+-)", swap] \ar[rr]  &   &R \ar[r]  &B \ar[dd] \ar[r]  &0      \\
        &       &   &   &R + N' \ar[ur] \ar[dr]    \\
\eta :  &0 \ar[r]      &M' \ar[r] &N' \ar[ur, "-"] \ar[rr] &\phantom{a} \ar[from=u, phantom, "-"]   &R' \ar[r] &B' \ar[r] &0.  
\end{tikzcd}\]
The maps to the second factor of $R + N'$ are both the negative $-$ of the natural maps. A butterfly arises from such a chain map if and only if it is split $Q \simeq R + N'$.

\begin{example}

Suppose given a chain map and a butterfly:
\[\begin{tikzcd}
0 \ar[r] &M \ar[r] \ar[d] &N \ar[rr] \ar[d] & &R \ar[r] \ar[d] &B \ar[r] \ar[d] &0 \\
0\ar[r] &M'\ar[r] \ar[dd] &N'\ar[rr] \ar[dr] & &R'\ar[r] &B'\ar[r] \ar[dd] &0 \\
 & & &Q \ar[ur] \ar[dr] \\
0\ar[r] &M''\ar[r] &N''\ar[rr] \ar[ur] &\phantom{a} \ar[from=u, phantom, "-"]&R''\ar[r] &B''\ar[r] &0.
\end{tikzcd}\]
Their composite is given by the pullback of the SW-NE sequence of $Q$ along $R \to R'$:
\[\begin{tikzcd}
0\ar[r] &M \ar[r] \ar[dd] &N \ar[rr] \ar[dr] & &R \ar[r] &B \ar[r] \ar[dd] &0 \\
 & & &Q \times_{R'} R \ar[ur] \ar[dr] \\
0\ar[r] &M''\ar[r] &N''\ar[rr] \ar[ur] &\phantom{a} \ar[from=u, phantom, "-"]&R''\ar[r] &B''\ar[r] &0.
\end{tikzcd}\]

\end{example}

One can pull back 2-extensions along maps $B' \to B$:
\[\begin{tikzcd}
0 \ar[r]   &M \ar[r] \ar[d, equals]  &N \ar[r]\ar[d, equals]  &R \times_B B' \ar[r]\ar[d] \pb  &B' \ar[r]\ar[r]\ar[d] &0 \\
0 \ar[r]   &M \ar[r]  &N  \ar[r] &R  \ar[r] &B \ar[r]  &0  
\end{tikzcd}\]
and push out 2-extensions along $M \to M'$ 
\[\begin{tikzcd}
0\ar[r]   &M\ar[r]\ar[d] \ar[dr, phantom, very near end, "\lrcorner"]    &N \ar[r]\ar[d] &R\ar[r]\ar[d, equals]  &B\ar[r]\ar[d, equals]  &0  \\
0\ar[r]   &M'\ar[r]   &N \oplus_M M'\ar[r]    &R\ar[r]  &B\ar[r]  &0.
\end{tikzcd}\]
The butterflies associated to these chain maps have the expected universal properties. 

The 2-categories $\EExal^2(B, M)$ vary contravariantly in $B$, forming a fibered 2-category $\EExal^2(-, M)$ over $\Aalg/B$. Pushforward and pullback also equip $\EExal^2(B, M)$ with a Picard category/strict 2-group structure via the diagonal $\Delta : B \to B \times B$ and addition maps $M \oplus M \to M$. The maps varying $B, M$ respect addition. Write $\Exal^2(B, M)$ for the group of isomorphism classes.

Write a pair of butterflies $Q : \xi \to \eta$, $\tilde Q : \tilde \xi \to \tilde \eta$ as in \eqref{eqn:butterfly}, the latter with $\tilde \cdot$'s on everything. Obtain a butterfly $Q \times \tilde Q : \xi \times \tilde \xi \to \eta \times \tilde \eta$:
\begin{equation}\label{eqn:productbutterfly}
\begin{tikzcd}[column sep=small]
0 \ar[r] &M \oplus \tilde M \ar[r] \ar[dd] &N \oplus \tilde N \ar[rr]\ar[dr] & &R \times \tilde R \ar[r] &B \times \tilde B \ar[r]\ar[dd] &0\\
 & & &Q \times \tilde Q \ar[ur] \ar[dr] \\
0 \ar[r] &M' \oplus \tilde M' \ar[r] &N' \oplus \tilde N' \ar[rr] \ar[ur] &\phantom{a} \ar[from=u, phantom, "-"] &R' \times \tilde R' \ar[r] &B' \times \tilde B'\ar[r] &0.
\end{tikzcd}
\end{equation}

If $Q, \tilde Q$ restrict to identities on the outer terms and so lie in $\EExal^2(B, M)$, define $Q + \tilde Q : \xi + \tilde \xi \to \eta + \tilde \eta$ by pulling back the product $Q \times \tilde Q$ \eqref{eqn:productbutterfly} along $\Delta : B \to B \times B$ and pushing out along $+ : M \oplus M \to M$. The middle term $Q + \tilde Q$ is the homology of the sequence
\[M \overset{(+ -)}{\longrightarrow} Q \times \tilde Q \overset{(+ -)}{\longrightarrow} B.\]
This makes the sum $+$ of two 2-extensions a bifunctor. Write $Q + \tilde \xi$ for the sum $Q + id_{\tilde \xi}$ with the identity.

\begin{remark}

If $\xi$ is a 2-extension, there is a canonical splitting of $\xi-\xi$:
\[\begin{tikzcd}
0  \ar[r] &M \ar[r] \ar[dd, equals] &N^{\oplus 2}/\Delta M  \ar[rr] \ar[dr] & &R + P \ar[r] &B \ar[r] \ar[dd, equals] &0 \\
 & & &R + N \ar[dr] \ar[ur] \\
0 \ar[r] &M \ar[r, equals] &M \ar[rr, "0", swap] \ar[ur] & &B \ar[r, equals] &B \ar[r] &0.
\end{tikzcd}\]
We identified $R \times_B R \simeq R + P$ via the shearing map $(r, r') \mapsto (r, r-r')$. The map $M \to N^{\oplus 2}/\Delta M$ is the inclusion into the first factor, while $N^{\oplus 2}/\Delta M \to R + N$ sends $(n, n') \mapsto (n, n-n')$.

\end{remark}

\begin{remark}\label{rmk:buttaut=exal}

Automorphisms of a butterfly $\xi$ are canonically identified with $\EExal(B, M)$. One can simply subtract $\xi$ from $Q : \xi \simeq \xi$ to get an automorphism of $\xi - \xi$. Use the canonical identifications $\xi \simeq \bar 0$ to get an automorphism of $\bar 0$, canonically an extension in $\EExal(B, M)$. 

\end{remark}

Two 2-extensions $\xi, \xi' \in \Exal^2(B, M)$ are called ``equivalent'' in \cite{lichtenbaumschlessinger}, \cite[pg. 4]{gerstenhaber2} if there is a finite path between them of chain maps between 2-extensions:
\[\xi \leftarrow \xi_1 \to \xi_2 \leftarrow \cdots \to \xi',\]
each restricting to the identity on $B$ and $M$. By \cite[Proposition 3]{wisebutterflies}, these chain maps induce invertible butterflies. The group $Ex^2(B, M)$ of equivalence classes of 2-extensions \cite[Definition 4.1.1]{lichtenbaumschlessinger}, \cite[pg. 4]{gerstenhaber2} coincides with our notion up to isomorphism:

\begin{lemma}\label{lem:LSequiv=isom}
Equivalence classes of 2-extensions $Ex^2(B, M)$ coincide with isomorphism classes $\Exal^2(B, M)$ of 2-extensions. 
\end{lemma}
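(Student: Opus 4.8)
## Proof Proposal

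The plan is to show the two equivalence relations on 2-extensions coincide by establishing containments in both directions. Recall that $Ex^2(B,M)$ is defined by the relation generated by \emph{chains} of multiplicative chain maps
\[
\xi \leftarrow \xi_1 \to \xi_2 \leftarrow \cdots \to \xi',
\]
each restricting to the identity on $B$ and $M$, while $\Exal^2(B,M)$ is the set of isomorphism classes in the $(2,0)$-category $\EExal^2(B,M)$, where isomorphism means the existence of a single invertible butterfly restricting to $\mathrm{id}_M, \mathrm{id}_B$. So I must show that the equivalence relation generated by chain maps is the same as the relation ``connected by an invertible butterfly.''

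First I would handle the direction $Ex^2 \Rightarrow \Exal^2$. The key input is the already-cited fact (\cite[Proposition 3]{wisebutterflies}, invoked in the text just before the lemma) that any multiplicative chain map restricting to isomorphisms on the outer terms $M$ and $B$ induces an invertible butterfly; concretely, the excerpt exhibits the butterfly associated to a chain map with middle object $R + N'$, and notes it is invertible precisely when it restricts to isomorphisms on $M$ and $B$. Since each $\xi_i \to \xi_{i\pm1}$ in the chain restricts to the identity on $M$ and $B$, every arrow in the chain yields an invertible butterfly. Composing these butterflies (using the composition law for butterflies recalled in the excerpt, and taking inverses for the backward-pointing arrows) produces a single invertible butterfly $\xi \to \xi'$ restricting to identities on the outer terms. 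Hence $\xi$ and $\xi'$ are isomorphic in $\EExal^2(B,M)$, so $Ex^2$-equivalence implies $\Exal^2$-equivalence.

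For the reverse direction $\Exal^2 \Rightarrow Ex^2$, suppose $Q : \xi \to \eta$ is an invertible butterfly restricting to $\mathrm{id}_M, \mathrm{id}_B$. The natural strategy is to exhibit an auxiliary 2-extension $\zeta$ built from the butterfly object $Q$ together with chain maps $\zeta \to \xi$ and $\zeta \to \eta$ (each the identity on $M$ and $B$), realizing $Q$ as a roof of chain maps. The candidate for $\zeta$ is the 2-extension whose middle terms come from the butterfly diagram itself: using the exact SW-NE sequence $0 \to N' \to Q \to R \to 0$ and the exact NW-SE sequence $0 \to N \to Q \to R' \to 0$ (the latter automatic by \cite[Proposition 3]{wisebutterflies}), the object $Q$ sits over both $R$ and $R'$ and maps multiplicatively to each. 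From $Q \to R$ one reads off a chain map to $\xi$, and from $Q \to R'$ a chain map to $\eta$, both fixing $M$ and $B$; I would verify that assembling $N \hookrightarrow Q$, $N' \hookrightarrow Q$ into a single length-four sequence $0 \to M \to (N \cap N') \to Q \to B \to 0$ (or the appropriate crossed-ring object extracted from the butterfly) gives a genuine 2-extension $\zeta$ with the two required chain maps. This exhibits $\xi$ and $\eta$ as $Ex^2$-equivalent.

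The main obstacle is the reverse direction: concretely identifying the roof 2-extension $\zeta$ and checking that the two legs are honest multiplicative chain maps of 2-extensions, i.e.\ that the crossed-ring and $R$-module structures on the middle terms are preserved. The diagram axioms \eqref{eitem:SWNEexact} and \eqref{eitem:crossmodcptibility} are exactly what guarantee the relevant maps $Q \to R$, $Q \to R'$ are ring maps inducing crossed rings, so the verification should be mechanical once $\zeta$ is correctly pinned down, but getting the exact sequence and the multiplicative compatibilities on the nose — rather than merely the underlying abelian-group statement of \cite{wisebutterflies} — is where the algebra-specific care is needed. A clean alternative, which I would keep in reserve, is to factor every invertible butterfly as a composite of split butterflies (those of the form $Q \simeq R + N'$, which by the discussion in the excerpt arise from chain maps) together with the canonical identifications; since split butterflies correspond to chain maps, this directly expresses any invertible butterfly as a finite chain, again giving $Ex^2$-equivalence.
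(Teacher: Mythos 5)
Your first direction is fine and is the same as the paper's: each chain map in the zig-zag induces an invertible butterfly, and these compose. The gap is in the reverse direction, where your concrete candidate fails. Inside $Q$ one has $N = \ker(Q \to R')$ and $N' = \ker(Q \to R)$, and the composite $N \hookrightarrow Q \to R$ equals $f$; hence $N \cap N' = e(M) \cong M$. So your proposed sequence $0 \to M \to (N \cap N') \to Q \to B \to 0$ reads $0 \to M \to M \to Q \to B \to 0$ with injective middle map: it is not exact at $N \cap N'$ (the kernel of $N \cap N' \to Q$ is $0$, not all of $M$), nor at $Q$ (its image is $e(M)$, whereas $\ker(Q \to B) = N + N'$). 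The correct apex is $\zeta : 0 \to M \to N \oplus N' \to Q \to B \to 0$, where $N \oplus N' \to Q$, $(n,n') \mapsto n + n'$, surjects onto $\ker(Q \to B)$ with kernel the \emph{anti}diagonal copy $m \mapsto (e(m), -e'(m))$ of $M$. The anticommutativity of the $N'QR'$ triangle then forces the remaining structure: since $N' \to Q \to R'$ equals $-f'$, the crossed-ring identity $xy = g(x).y$ forces the multiplication $(n_1,n_1')(n_2,n_2') = (n_1 n_2, \, -n_1' n_2')$ on $N \oplus N'$, the chain map to $\xi$ is $(\mathrm{id}_M, \mathrm{pr}_N, Q \to R, \mathrm{id}_B)$, and the chain map to $\eta$ must use $-\mathrm{pr}_{N'}$ in the second slot. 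These sign verifications are exactly the ``algebra-specific care'' you defer, and they are the actual content of this direction; without them the argument is incomplete.

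Your reserve strategy does not rescue this: the claim that every invertible butterfly factors as a composite of split butterflies is established nowhere, and it would produce a \emph{directed} chain $\xi \to \cdots \to \eta$, a stronger statement than the lemma. What the roof actually gives is $Q = Q_\eta \circ Q_\xi^{-1}$ with $Q_\xi : \zeta \to \xi$, $Q_\eta : \zeta \to \eta$ split, i.e.\ a split butterfly composed with the \emph{inverse} of one, so the fallback is essentially circular. For comparison, the paper's proof contains the corrected form of your roof — its middle row is exactly $0 \to M \to N \oplus N' \to L \to 0$ with $L = \ker(Q \to B)$ — but then replaces the apex ring $Q$ by a free algebra $\ZZ[S]$ (taking $S = Q$, since surjections of sheaves of sets need not have sections) and takes $\bar F = K \times_L (N \oplus N')$, obtaining a free 2-extension $0 \to M \to \bar F \to \ZZ[S] \to B \to 0$ with chain maps to both $\xi$ and $\xi'$. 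Your direct roof, once repaired as above, skips the free-algebra step and would give a shorter argument; but as written, the object at the heart of your proof is not a 2-extension.
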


\begin{proof}

Chain maps induce butterflies, so paths of chain maps between $\xi, \xi'$ induce paths of butterflies. 

Given a butterfly $Q : \xi \simeq \xi'$ as in \eqref{eqn:butterfly} with $M = M', B = B'$, we must produce a path of chain maps between $\xi$ and $\xi'$. We construct a ``free'' 2-extension \cite[Definition 2.1.3]{lichtenbaumschlessinger} mapping to both. 

Choose a polynomial ring with a surjection $\ZZ[S] \to B$ with a lift:
\begin{equation}\label{eqn:freesplitsurj}
\begin{tikzcd}
        &Q \ar[d]      \\
\ZZ[S] \ar[r] \ar[ur, dashed]        &B.
\end{tikzcd}    
\end{equation}
For example, take $S = Q$. One cannot take an arbitrary polynomial ring surjecting onto $B$ as in op. cit. because not all surjections of sheaves of sets have sections. Write 
\[K = \ker (\ZZ[S] \to B), L = \ker(Q \to B),  Z = \ker(R \to B), Z' = \ker(R' \to B).\]
The lift \eqref{eqn:freesplitsurj} gives a map $K \to L$. The ideals $N, N'$ lie inside $L$. There is a commutative diagram
\[\begin{tikzcd}
0 \ar[r] &M \ar[r] \ar[d, equals] &N \ar[r] &Z \ar[r] &0 \\
0 \ar[r] &M \ar[r] \ar[d, equals] &N \oplus N' \ar[d] \ar[u] \ar[r] &L \ar[d] \ar[u]  \ar[r] &0 \\
0 \ar[r] &M \ar[r] &N' \ar[r] &Z' \ar[r] &0.
\end{tikzcd}\]
Choose a free $\ZZ[S]$-module $F$ surjecting onto $K$ with a lift
\[\begin{tikzcd}
               & &N \oplus N' \ar[d]         \\
F \ar[r] \ar[urr, dashed, bend left=15]       &K \ar[r]      &L.
\end{tikzcd}\]    
Write $U = \ker(F \to K)$ and $\bar F = F \oplus_U M$ for the pushout:
\[\begin{tikzcd}
0\ar[r] &M \ar[r] \ar[d, equals] &\bar F  \ar[r] \ar[d] \pb &K \ar[r] \ar[d] &0 \\
0\ar[r] &M\ar[r] &N \oplus N'\ar[r] &L\ar[r] &0.
\end{tikzcd}\]
Give $\bar F = K \times_L (N \oplus N')$ the natural non-unital commutative algebra structure. Taking the cup product of each with its extension of algebras, we get chain maps
\[\begin{tikzcd}
0 \ar[r] &M \ar[r] &N \ar[r] &R \ar[r] &B \ar[r] &0 \\
0 \ar[r] &M \ar[r] \ar[d, equals] \ar[u, equals] &\bar F \ar[r] \ar[d] \ar[u] &\ZZ[S]  \ar[d] \ar[u]\ar[r] &B  \ar[d, equals] \ar[u, equals] \ar[r] &0 \\
0 \ar[r] &M \ar[r] &N' \ar[r] &R' \ar[r] &B \ar[r] &0.
\end{tikzcd}\]
The result is therefore a 2-extension mapping to both $\xi$ and $\xi'$. 

\end{proof}

\subsection{$A$-algebra structure on 2-extensions}\label{ss:algstr}

A squarezero extension 
\[\zeta : \qquad 0 \to M \to B' \to B \to 0\]
of an $A$-algebra $B$ is an $A$-algebra extension if the map $A \to B$ is given a factorization $A \dashrightarrow B'$ through the extension. This equivalently trivializes the pullback of $\zeta$ to $A$. 

An $A$-algebra structure on a 2-extension 
\[\xi : \qquad 0 \to M \to N \to R \to B \to 0\] 
will be a trivialization $Q : \xi \simeq \bar 0$:
\[\begin{tikzcd}
0 \ar[r] &M \ar[r] \ar[dd, equals] &N \ar[rr] \ar[dr] &&R \ar[r] &B \ar[r] \ar[dd, equals] &0        \\
 &&&Q \ar[ur] \ar[dr] \\
0 \ar[r] &M \ar[r, equals]  &M \ar[rr, "0", swap] \ar[ur]  &&B \ar[r, equals] &B \ar[r] &0
\end{tikzcd}\]    
of its pullback $\xi|_A$ to $A$. A factorization $A \dashrightarrow R$ gives a chain map $\bar 0 \to \xi|_A$:
\[\begin{tikzcd}
 &0  \ar[r] &M  \ar[r, equals] \ar[d]  &M \ar[r, "0"] \ar[d]  &A   \ar[d]\ar[r, equals] &A   \ar[d]\ar[r] &0 \\
\xi|_A : &0 \ar[r]  &M \ar[r]  &N  \ar[r] &R \times_B A  \ar[r] &A \ar[r] &0.
\end{tikzcd}\]
The associated butterfly gives a trivialization of $\xi|_A$. Not all trivializations come from such a factorization. 

\begin{definition}
A \textit{splitting} of a 2-extension $\xi$ is an invertible butterfly $Q : \xi \simeq \bar 0$ to the trivial 2-extension \eqref{eqn:trivial2extn}. Let $\Split(\xi) = \IIsom(\xi, \overline{0})$ be the fibered category of splittings of $\xi$.
\end{definition}

\begin{definition}
An $A$-\emph{algebra structure} on a 2-extension $\xi$ is a splitting of $\xi|_A$. 
\end{definition}

Splittings of $\xi|_A$ can be rearranged into a diagram
\begin{equation}\label{eqn:Aalg2extn}
\begin{tikzcd}
 &0 \ar[r] \ar[d] &N \ar[r] \ar[d, equals] &Q_A \ar[r] \ar[d] &A \ar[r] \ar[d] &0 \\
0 \ar[r] &M \ar[r] &N \ar[r] &R \ar[r] &B \ar[r] &0.
\end{tikzcd}    
\end{equation}
The top row is exact but need not be an extension of algebras $N^2 \neq 0$. Gerstenhaber studied splittings in this form:

\begin{remark}\label{rmk:gerstenhaberdidit}
Gerstenhaber wrote splittings of $\xi$ as
\[\begin{tikzcd}
 &0 \ar[r] \ar[d] &N \ar[r] \ar[d, equals] &Q \ar[r] \ar[d] &B \ar[r] \ar[d, equals] &0 \\
0 \ar[r] &M \ar[r] &N \ar[r] &R \ar[r] &B \ar[r] &0
\end{tikzcd}\]
and referred to them as ``solutions.'' He observed these diagrams form a torsor under $\Exal(B, M)$ \cite[Theorems 2, 5]{gerstenhaber2}. There is an analogous way of writing a splitting as a chain map from $\xi$ \emph{to} a shorter sequence that the reader may define.

The action of an extension 
\[\zeta : \qquad 0 \to M \to B' \to B \to 0\]
on $\xi$ gives:
\[0 \to N \to \bar{R' \times_B B'} \to B \to 0.\]
The middle term $\bar{R' \times_B B'}$ is the pushout of the sum of $\xi$ and $\zeta$ along the addition map $N \times M \to N$. 
\end{remark}

These 2-extensions of $A$-algebras form a 2-category where the butterflies $Q : \xi \simeq \eta$ come with an isomorphism between the two splittings of their pullbacks to $A$. If the $A$-algebra structure of $\xi$ is written \eqref{eqn:Aalg2extn}, the splitting of $\eta$ needs an isomorphism to the composite:
\[\begin{tikzcd}
 &0 \ar[r] \ar[d] &N' \ar[r] \ar[d] &(Q \times_R Q_A)/N \ar[r] \ar[d] &A \ar[r] \ar[d] &0 \\
0 \ar[r] &M \ar[r] &N' \ar[r] &R' \ar[r] &B \ar[r] &0.
\end{tikzcd}\]
The category of $A$-algebra structures on the trivial extension $\bar 0$ is canonically identified with extensions $\Split(\bar 0|_A) = \Exal_A(B, M)$.

Write $\EExal^2_A(B, M)$ for this category of 2-extensions of $A$-algebras. It is the homotopy fiber of the map $\EExal^2(B, M) \to \EExal^2(A, M)$ over the trivial extension. The functor forgetting the $A$-algebra structure $\EExal^2_A(B, M) \to \EExal^2(B, M)$ is an $\EExal(A, M)$-pseudotorsor over each object.

One can define a category $\cal C$ of $A$-algebra structures on 2-extensions na\"ively by asking $A \to B$ to factor through $R$. These are $A$-algebras in our sense together with an additional trivialization $Q \simeq A+M$ of the top row of \eqref{eqn:Aalg2extn}. The transitivity exact sequence \S \ref{s:transles} expressing $\Exal^i$ as the derived functors of $\Der$ does not hold using this notion.

\begin{remark}
The zero object $\bar 0$ of $\EExal_A^2(B, M)$ is the trivial 2-extension $\bar 0$ with the trivial splitting $\bar 0|_A$ as $A$-algebra structure. An isomorphism $\xi \simeq \bar 0$ \emph{as} $A$-algebra extensions is \emph{not} simply a diagram
\[\begin{tikzcd}
 &&0 \ar[r] &N \ar[r] \ar[d] &Q_A \ar[r] \ar[d] &A \ar[r] \ar[d] &0 \\
\zeta : &&0 \ar[r] \ar[d] &N \ar[r] \ar[d] &Q_B \ar[r] \ar[d] &B \ar[r] \ar[d] &0 \\
&0 \ar[r] &M \ar[r] &N \ar[r] &R \ar[r] &B \ar[r] &0.
\end{tikzcd}\]
If the middle row is $\zeta$, this diagram splits $\zeta|_{Q_B}$. A splitting as $A$-algebras need only trivialize an extension of $A$ that restricts to $\zeta|_{Q_B}$.  
\end{remark}

\begin{remark}
Lemma \ref{lem:LSequiv=isom} holds for equivalence classes of $A$-algebra 2-extensions. 
\end{remark}

\section{The topology}

If everything is a cover, nothing is a sheaf and vice versa. J. Wise introduced a topology where deformation problems are locally trivial but they still satisfy descent. 

\begin{definition}[{\cite{wisealgs1}}]\label{defn:wisetop}
A family $\{C_i \to D\}$ in $E$ is covering if, for each finite set of sections $\Lambda \subseteq \Gamma(U, D)$ over an open $U$ of the topos $E$, there exists a single $i$ such that $\Lambda$ lifts to $\Gamma(V, C_i)$ for some epimorphism $V \to U$ in $E$. 

Write $E^*$ for $E$ with this topology instead of the canonical one. 

\end{definition}

\begin{remark}
Epimorphisms in $E$ are coverings, so the topology on $E^*$ is subcanonical. If $E$ arises from a site $\cal C$, a family $\{C_i \to D\}$ is covering if and only if, for any finite set of sections $\Lambda \subseteq \Gamma(U, D)$ over $U \in \cal C$, there is some $C_i$ and a covering sieve $R \subseteq \hh{U}$ such that $\Lambda$ lifts to $\Gamma(R, C_i)$. 

For a family $\{C_i \to D\}$ to be covering, it does not suffice that $\bigsqcup C_i \to D$ is covering. 

If $S \in E$ has associated morphism $j : E/S \to E$ and $\Lambda$ is a finite set, refer to $j_! \underline{\Lambda}$ as a constant sheaf over $S$. Write $I \subseteq E$ for the full subcategory on constant sheaves of finite sets over any object.  A family $\{C_i \to D\}$ is covering if, for all $j_! \underline{\Lambda} \in I$, there is a $C_i$ such that the sheaf of lifts
\[\begin{tikzcd}
        &j^* C_i \ar[d]        \\
\underline{\Lambda} \ar[r] \ar[ur, dashed]         &j^* D 
\end{tikzcd}\]
covers the final object $S$ of $E$. By the Comparison Lemma \cite[Theorem III.4.1]{SGA4}, the categories of sheaves $Sh(I) \simeq Sh(E^*)$ are equivalent. 

\end{remark}

\begin{example}

The topos $E$ with canonical topology is different from $E^*$. If $E = (Sets)$, sheaves on $E^*$ are presheaves on finite sets \cite[Remark pg. 188]{wisealgs1}. These are contravariant, set-valued FI-modules \cite{FImodules}. Cosheaves on $E^*$ are ordinary (covariant) FI-modules, using the identities from \cite{nlab:cosheaf}.

\end{example}

Let $\cal C$ be a site. Recall that $X \in \cal C$ is a \textit{local object} \cite[Definition 0.1]{gabberkellylocal} if any cover $\{Y_i \to X\}$ has a section $X \dashrightarrow Y_i$. Equivalently, there is only one covering sieve of $X$.

\begin{lemma}
Equip the category of sheaves $Sh(\cal C)$ on a site $\cal C$ with the canonical topology. Retracts of the sheaves $\hh{U}$ represented by local objects $U$ of $\cal C$ are local in $Sh(\cal C)$. If every object $F$ of $Sh(\cal C)$ receives an epimorphism from local objects $\bigsqcup \hh{U} \to F$, these are all the local objects of $Sh(\cal C)$

\end{lemma}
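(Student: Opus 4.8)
The plan is to reduce the statement to two facts: \textbf{(a)} a retract of a local object of $Sh(\cal C)$ is again local, and \textbf{(b)} $\hh{U}$ is local in $Sh(\cal C)$ whenever $U$ is a local object of $\cal C$. Granting these, the first sentence of the lemma is immediate. The second sentence then follows formally: if $F$ is any local object and $\coprod_k \hh{U_k} \to F$ is the postulated epimorphism with each $U_k$ local, then $\{\hh{U_k} \to F\}$ is a cover of $F$ in the canonical topology (in a topos the coverings are exactly the jointly epimorphic families of sheaves), so locality of $F$ produces a section $F \to \hh{U_k}$ for some $k$; that is, $F$ is a retract of $\hh{U_k}$. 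Together with (a) and (b) this shows the local objects of $Sh(\cal C)$ are \emph{exactly} the retracts of representables of local objects of $\cal C$.

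Fact (a) I would deduce from base-change stability of covers. Write $Y \xrightarrow{i} X \xrightarrow{r} Y$ with $ri = \mathrm{id}_Y$ and $X$ local. Given a cover $\{G_j \to Y\}$, pull it back along $r$ to get a cover $\{r^*G_j \to X\}$ of $X$; locality of $X$ yields, for some $j$, a section $s : X \to r^*G_j$ of $r^*G_j \to X$. Composing $Y \xrightarrow{i} X \xrightarrow{s} r^*G_j \to G_j$ and chasing the defining pullback square shows this is a section of $G_j \to Y$, so $Y$ is local.

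Fact (b) is the crux. Let $\{G_i \to \hh{U}\}$ be a cover of $\hh{U}$, so that $G := \coprod_i G_i \to \hh{U}$ is an epimorphism of sheaves. By the description of epimorphisms in the canonical topology, the tautological section $\mathrm{id}_U \in \hh{U}(U)$ lifts to $G$ over some covering sieve $R$ of $U$; since $U$ is local, $R$ is the maximal sieve, so $\mathrm{id}_U$ itself lifts to a section $\tilde s \in G(U)$. It remains to factor $\tilde s$ through a single summand. Since $G$ is the sheafification of the presheaf coproduct $V \mapsto \coprod_i G_i(V)$, sections of $G$ locally lift to this presheaf coproduct, where they land in a single (disjoint) summand; thus over each member of some covering sieve of $U$ the restriction of $\tilde s$ lies in one $G_i$, and locality of $U$ again collapses this to a single index with $\tilde s \in G_i(U)$. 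The corresponding map $\hh{U} \to G_i$ is then a section of $G_i \to \hh{U}$, proving $\hh{U}$ local. The main obstacle is precisely this step: promoting two a priori only-local statements—liftability of $\mathrm{id}_U$ along the epimorphism, and factorization of $\tilde s$ through one summand—to global ones, each of which is exactly where the hypothesis that $U$ admits only the maximal covering sieve is used.
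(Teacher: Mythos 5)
Your proof is correct and takes essentially the same route as the paper's: show retracts of local objects are local, show $\hh{U}$ is local in $Sh(\cal C)$ when $U$ is local in $\cal C$, and use a section of the hypothesized cover $\{\hh{U_k} \to F\}$ to exhibit any local $F$ as a retract of some $\hh{U_k}$. The only difference is one of detail: the paper asserts the two preliminary facts (after reducing to covers by coproducts of representables), while you supply the omitted verifications --- the pullback argument for stability of locality under retracts, and the local-surjectivity/sheafified-coproduct argument, using the unique covering sieve of $U$ twice, for locality of $\hh{U}$.
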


\begin{proof}

Covers in $Sh(\cal C)$ are epimorphisms, and every sheaf $F$ has a canonical epimorphism 
\[\bigsqcup_{s \in F(V)} \hh{V} \to F.\]
It suffices to check $F \in Sh(\cal C)$ is local on epimorphisms with source of the form $\bigsqcup \hh{V}$. 

A retraction of a local object $\hh{U}$ is seen to be local. Under the assumption, the canonical epimorphism $\bigsqcup \hh{V} \to F$ can be refined by one with only local objects $V$. If $F$ is local, it admits a section exhibiting $F$ as a retraction of some local $\hh{V}$. 

\end{proof}

\begin{remark}
The local objects in $E^*$ are finite constant sheaves $j_! \underline{\Lambda} \in I$ over \textit{local} objects $S$ of $E$. This is because their standard cover by all the objects of $I$ must be split locally in $S$, but all localizations of $S$ are themselves split. Not all sheaves of finite sets are local. 

\end{remark}

Let $\Aalg/B$ be the category of $A$-algebras over $B$. Endow this category with the topology of Definition \ref{defn:wisetop}. Write $\Gamma_A(B, F), H^p_A(B, F)$ for the global sections and cohomology groups of a sheaf $F$ on $\Aalg/B$. Local objects are split algebra quotients of finite free algebras on local objects of $E$:
\[
\begin{tikzcd}
j_! j^*A[x_1, \dots, x_n] \ar[r]   &C \ar[l, bend right, dashed]
\end{tikzcd}
\]

\begin{example}

One can give the category of $A$-modules $\Amod$ the same topology. Local objects are direct summands of finite free modules over local objects in $E$. If $E = (Sets)$, local objects of $\Amod$ are the finitely generated projective modules. 

\end{example}

\begin{remark}
Algebraic structures such as algebras and non-unital commutative algebras descend in the topology on $E$ as in the proof of \cite[Proposition 2.7]{mymoduledefms}. This topology is also subcanonical on $\Amod, \Aalg$. 

For each finite set $\Lambda \subseteq \Gamma(U, D)$ of sections on some open $j : U \to E$, obtain a map $j_! j^* A[\Lambda] \to D$. As these range over all such finite sets of sections $\Lambda$, we obtain a cover of $D$. 

Every algebra $D$ has this canonical cover by algebras $j_! j^* A[x_1, \dots, x_n]$ which become globally free after localizing. These are different from ``locally free'' modules or algebras on a sheaf of sets $S$, which may still have cohomology. Indeed, cohomology is \emph{defined} via locally free modules $A^S$ in \cite[V.2.1.2]{SGA4}:
\[H^*(S, M) := \Ext^*_A(A^S, M).\]

\end{remark}

Let $\dd{M}$ be the sheaf on $\Aalg/B$ of $A$-derivations with values in $M$:
\[\dd{M}(C) = \Der_A(C, M).\]

Given two 2-extensions $\xi, \eta$, the fibered category $\IIsom(\xi, \eta)$ of invertible butterflies between them is a stack over $\Aalg/B$. This holds for the extensions as modules by \cite[Lemma 4.3]{mymoduledefms}, and we can descend algebraic structures in the topology.

\begin{proposition}\label{prop:isomdMgerbe}
The stack $\IIsom(\xi, \eta)$ of invertible butterflies between any pair of 2-extensions is a gerbe for $\dd{M}$. In particular, $\Split(\xi)$ is a $\dd{M}$-gerbe. 
\end{proposition}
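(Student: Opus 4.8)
The plan is to show that $\IIsom(\xi, \eta)$ is a gerbe for $\dd{M}$ by verifying the three defining properties of a gerbe banded by $\dd{M}$: local nonemptiness, local connectedness (any two objects are locally isomorphic), and a canonical identification of the automorphism sheaf of each object with $\dd{M}$. Since we are already told that $\IIsom(\xi, \eta)$ is a \emph{stack} (by descent of the module-level statement \cite[Lemma 4.3]{mymoduledefms} together with descent of algebraic structures in the topology), the work is to check the gerbe axioms and to pin down the band. I would handle the three clauses in turn, and I expect the automorphism-sheaf computation to be the conceptual heart, while the local-existence clause is where the special topology of Definition \ref{defn:wisetop} does the real work.

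\textbf{Automorphisms and the band.} First I would identify automorphisms. By Remark \ref{rmk:buttaut=exal}, automorphisms of a single object (a butterfly) $Q : \xi \simeq \eta$ are canonically identified with $\EExal(B, M)$: one subtracts $Q$ from itself to get an automorphism of $\xi - \xi$, transports along the canonical splitting $\xi \simeq \bar 0$, and lands in automorphisms of $\bar 0$, i.e.\ an extension in $\EExal(B, M)$. The key point already recorded in the excerpt is that an automorphism of an extension $\zeta : 0 \to M \to B' \to B \to 0$ \emph{is} an $A$-derivation $B \to M$, following \cite[4.3.4]{lichtenbaumschlessinger}. Thus I would argue that the sheaf $\AAut(Q)$ of automorphisms of any object is canonically $\dd{M}$, and that this identification is independent of the chosen object $Q$ and natural, so it gives the band. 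This independence and naturality is exactly what upgrades ``stack with $\dd{M}$-valued automorphisms'' to ``gerbe banded by $\dd{M}$.''

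\textbf{Local connectedness.} Next I would show any two invertible butterflies $Q, Q' : \xi \simeq \eta$ are locally isomorphic. Two such differ by an automorphism, hence by an element of $\dd{M}$ at the level of sections; the statement that they are \emph{locally} isomorphic reduces to the $\dd{M}$-torsor structure being locally trivial, i.e.\ derivations $\Der_A(C, M)$ vanish or are trivialized after passing to a cover. This is the sheaf-theoretic shadow of the torsor-of-splittings statement already used for extensions, $H^1(\Aalg/B, \dd{M}) \simeq \EExal_A(B, M)$, so I would phrase it as: the difference class lives in the torsor which is locally trivial by construction of the Wise topology.

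\textbf{Local nonemptiness.} This is the step I expect to be the main obstacle. I must show that after passing to a cover in the topology of Definition \ref{defn:wisetop}, an invertible butterfly $\xi \simeq \eta$ exists. Since invertibility of a butterfly is detected on the outer terms $M \longsimeq M'$, $B \longsimeq B'$ by \cite[Proposition 3]{wisebutterflies}, it suffices to produce a butterfly locally; and a butterfly can be built from a splitting-type trivialization once the relevant surjections admit sections. The defining feature of the topology is precisely that, although surjections of sheaves of sets need not split, \emph{over local objects} (split algebra quotients of finite free algebras on local objects of $E$) every finite set of sections lifts. So the strategy is to reduce the construction of the connecting maps of the butterfly to lifting finitely many generators through the surjections $R \to B$, $N \oplus N' \to L$, etc.\ --- exactly the lifting problems that the topology declares solvable on a cover, mirroring the ``free'' 2-extension construction in the proof of Lemma \ref{lem:LSequiv=isom}. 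The care needed here is to ensure the lifts can be chosen compatibly so that the crossed-ring and anticommutativity conditions of a butterfly hold, which is where I would invoke that algebraic structures descend and that the tensor/pushout constructions producing crossed rings are available locally.
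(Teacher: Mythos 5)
There is a genuine gap, and it stems from a persistent confusion between the two categorical levels of $\EExal^2$. In the stack $\IIsom(\xi,\eta)$ the \emph{objects} are invertible butterflies $Q$ and the \emph{morphisms} are maps of butterflies $Q \to Q'$; the band of the gerbe is therefore the sheaf of 2-automorphisms of an object, not the 1-automorphisms of $\xi$. Remark \ref{rmk:buttaut=exal} identifies the latter (butterflies $\xi \simeq \xi$) with $\EExal(B,M)$, so invoking it to compute $\Aut(Q)$ computes the wrong thing — if that were the automorphism sheaf of an object, the band would be extensions, not derivations. The correct computation, which is what the paper does, is that maps of the split butterfly $B + \epsilon M : \bar 0 \simeq \bar 0$ to itself are exactly automorphisms of the trivial extension $0 \to M \to B + \epsilon M \to B \to 0$, hence derivations by \cite[4.3.4]{lichtenbaumschlessinger}, and they are $A$-derivations precisely when they respect the $A$-algebra structure (i.e.\ lie over the trivial 2-extension of $A$ by $M$). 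The same level confusion breaks your local-connectedness step: two objects $Q, Q'$ of $\IIsom(\xi,\eta)$ differ by a 1-automorphism, i.e.\ by an extension $\zeta \in \EExal_A(B,M)$, not by ``an element of $\dd{M}$,'' and local connectedness holds because any such extension splits locally, a splitting of $\zeta$ furnishing a 2-isomorphism $Q \simeq Q'$. Your stated mechanism — that ``derivations $\Der_A(C,M)$ vanish or are trivialized after passing to a cover'' — is false: $\dd{M}$ is a sheaf on the site and is certainly not locally zero; what dies locally are extension classes, not derivations.

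The second gap is local nonemptiness, which you correctly identify as the heart but leave as a plan. The paper's actual work is here: one localizes so that $B = A[S]$ is free with a \emph{chosen} section $S \dashrightarrow R$, and then uses the $A$-algebra structure $Q_A$ of $\xi$ to build an explicit splitting — the section gives a factorization $Q_A[S] \to R$, the chain map of the $A$-algebra structure factors through the base-changed row $0 \to M \to N[S]\oplus M \to Q_A[S] \to A[S] \to 0$, and that row admits a canonical splitting \emph{as $A$-algebras} via $N[S] + \epsilon M$; hence $\xi \simeq \bar 0$ locally, likewise $\eta \simeq \bar 0$, and $\IIsom(\xi,\eta)$ locally contains the identity of $\bar 0$. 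Your proposal to ``lift finitely many generators through the surjections $R \to B$, $N \oplus N' \to L$'' produces only set-theoretic lifts; nothing in the sketch explains how such lifts assemble into an actual butterfly satisfying the crossed-ring, exactness, and anticommutativity axioms, and the appeal to Lemma \ref{lem:LSequiv=isom} does not help, since that lemma compares two \emph{given} isomorphic 2-extensions rather than producing a splitting of one. Moreover the splitting one needs is a splitting in $\EExal^2_A(B,M)$, compatible with the $A$-algebra structures on both sides — a constraint your construction never addresses, and exactly the reason the paper routes the argument through the pullback $\xi|_A$ and its trivialization.
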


\begin{proof}

The question is local. One can assume $B = A[S]$ is free with a chosen section $S \dashrightarrow R \to B$. Suppose $\xi$ has $A$-algebra structure
\[\begin{tikzcd}
 &&0 \ar[r] \ar[d]  &N \ar[r] \ar[d, equals]  &Q_A \ar[r] \ar[d]  &A \ar[r] \ar[d] &0 \\
\xi : &0 \ar[r]  &M \ar[r]  &N \ar[r]  &R \ar[r]  &A[S] \ar[r]  &0.
\end{tikzcd}\]
The map $Q_A \to R$ has a chosen factorization $Q_A[S] \to R$. The top row consists of $Q_A$-modules and the bottom $R$-modules, so the chain map factors through the tensor product of the top row with $- \otimes_{Q_A} Q_A[S]$:
\[\begin{tikzcd}
0  \ar[r] &N[S] \ar[r] \ar[d]  &Q_A[S] \ar[r] \ar[d] &A[S] \ar[r] \ar[d, equals] &0 \\
0 \ar[r]  &P  \ar[r] &R  \ar[r] &A[S] \ar[r]  &0,
\end{tikzcd}\]
writing $P = \ker (R \to A[S])$. Obtain a chain map of 2-extensions
\[\begin{tikzcd}
0  \ar[r] &M \ar[r] \ar[d, equals]  &N[S] \oplus M \ar[r] \ar[d]  &Q_A[S] \ar[r] \ar[d]  &A[S] \ar[r] \ar[d, equals]  &0 \\
0 \ar[r]  &M \ar[r]  &N \ar[r]  &R \ar[r]  &A[S] \ar[r]  &0.
\end{tikzcd}\]
It suffices to split the top row, which has a canonical splitting 
\[\begin{tikzcd}
 &0 \ar[r] \ar[d]  &N \ar[r] \ar[d]  &Q_A \ar[r] \ar[d]  &A  \ar[r] \ar[d] &0 \\
0 \ar[r]  &M \ar[r] \ar[d, equals]  &N[S] +\epsilon M \ar[r] \ar[d]  &Q_A[S] \ar[r] \ar[d]  &A[S] \ar[r] \ar[d, equals]  &0 \\
0 \ar[r]  &M \ar[r, equals]  &M \ar[r, "0", swap]  &A[S] \ar[r, equals]  &A[S] \ar[r]  &0
\end{tikzcd}\]
\emph{as} $A$-\emph{algebras}. We have a local splitting $\xi \simeq \bar 0$ in $\EExal^2_A(B, M)$. We can assume $\xi = \bar 0$ and $\eta = \bar 0$.

An automorphism $Q$ of $\bar 0$ corresponds to an extension $\zeta \in \EExal_A(B, M)$. Any extension $\zeta$ can be locally split. Splittings of $\zeta$ give isomorphisms of $Q$ with the identity, implying that $Q \simeq id_{\bar 0}$ and $\IIsom(\xi, \eta)$ is locally nonempty and locally connected. 

Automorphisms of the split butterfly $B + \epsilon M : \bar 0 \simeq \bar 0$ coincide with automorphisms of the trivial extension
\[0 \to M \to B + \epsilon M \to B \to 0.\]
These give derivations $B \to M$ \cite[4.3.4]{lichtenbaumschlessinger}. Automorphisms lie over the trivial 2-extension of $A$ by $M$ if and only if they are $A$-derivations. 

\end{proof}

Gerstenhaber obtained this proposition in a rather different form as in Remark \ref{rmk:gerstenhaberdidit}. If $A = \ZZ$, one can localize by $\ZZ[R]$ or $\ZZ[Q]$ to immediately split any 2-extension or butterfly as in the proof.

\section{Deformations of Algebras}

Fix a squarezero extension of rings:
\begin{equation}\label{eqn:fixextn}
\omega : \qquad 0 \to I \to A' \to A \to 0    
\end{equation}
and an $A$-algebra $B$. We equate three explicit sequences comparing $\Der, \Exal, \Exal^2$ of $A$-algebras with $A'$-algebras. 

\subsection{The Transitivity Sequence}\label{s:transles}

There is a straightforward long exact sequence for cohomology $H_A^*(B, \dd{M})$ arising from short exact sequences in the module $M$ \cite[Theorem 3]{gerstenhaber2}. Fix maps of rings $A \to B \to C$ in $E$ and a $C$-module $M$. We describe the ``transitivity triangle'' \cite[\S II.2.1]{illusie1} obtained by varying the algebra $A \to B$:
\begin{equation}\label{eqn:transles}
\begin{tikzcd}
0 \ar[r] &\Der_B(C, M) \ar[r]  &\Der_A(C, M)\arrow[d, phantom, ""{coordinate, name=Z}]
 \ar[r]   &\Der_A(B, M)  \arrow[dll,
"\gamma"
rounded corners,
to path={ -- ([xshift=2ex]\tikztostart.east)
|- (Z) [near end]\tikztonodes
-| ([xshift=-2ex]\tikztotarget.west)
-- (\tikztotarget)}]      \\
&\Exal_B(C, M) \ar[r, "\tau"]  &\Exal_A(C, M) \arrow[d, phantom, ""{coordinate, name=Y}]
 \ar[r] &\Exal_A(B, M) \arrow[dll,
 "\delta",
rounded corners,
to path={ -- ([xshift=2ex]\tikztostart.east)
|- (Y) [near end]\tikztonodes
-| ([xshift=-2ex]\tikztotarget.west)
-- (\tikztotarget)}] \\
&\Exal_B^2(C, M) \ar[r, "\rho"]     &\Exal_A^2(C, M) \ar[r]    &\Exal_A^2(B, M).
\end{tikzcd}    
\end{equation}
Except for the boundary maps $\gamma, \delta$, the maps are induced from evident functorialities of $\Der, \Exal, \Exal^2$ with respect to changing rings. 

The maps $\rho, \tau$ change the algebra structure but not the underlying extension/2-extension. For this to be a complex, $\rho \circ \delta = 0$ and $\tau \circ \gamma = 0$ imply that the underlying extension and 2-extension of $\delta(\zeta), \gamma(\theta)$ must be trivial.

The boundary map $\gamma$ sends a derivation $\theta : B \to M$ to the trivial extension
\[0 \to M \to C + \epsilon M \to C \to 0,\]
but with nontrivial $B$-algebra structure
\[B \to C + \epsilon M\]
given by $\theta$ on the second component.

The map $\delta$ sends an extension 
\[\zeta : \qquad 0 \to M \to B' \to B \to 0\]
to the trivial 2-extension with $\zeta$ as $B$-algebra structure:
\[\begin{tikzcd}
 &0 \ar[r] \ar[d] &M \ar[r] \ar[d, equals] &B' \ar[r] \ar[d] &B \ar[r] \ar[d] &0 \\
0 \ar[r] &M \ar[r, equals] &M \ar[r, "0", swap] &C \ar[r, equals] &C \ar[r] &0.
\end{tikzcd}\]

Sequence \eqref{eqn:transles} is a complex because the $A$-algebra structures on $\gamma(\theta)$ and $\delta(\zeta)$ are always trivial. This is because $\theta$ is an $A$-derivation and $\zeta$ is an $A$-algebra extension.

\begin{lemma}\label{lem:exacttransles}
The sequence \eqref{eqn:transles} is exact. 
\end{lemma}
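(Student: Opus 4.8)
The plan is to recognize \eqref{eqn:transles} as the long exact sequence of Illusie's transitivity triangle under the identifications of Theorem \ref{thm:algcohomisexal}, and then to check that the explicitly described maps coincide with the canonical ones.

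First I would apply Theorem \ref{thm:algcohomisexal} to rewrite every term as a cotangent cohomology group. Viewing the $C$-module $M$ as a $B$-module through $B \to C$ and using the base-change adjunction $\Ext^p_C(\ccx{B/A} \otimes^L_B C, M) = \Ext^p_B(\ccx{B/A}, M)$, the nine entries of \eqref{eqn:transles} become, for $p = 0, 1, 2$, the groups $T^p(C/B, M) = \Ext^p_C(\ccx{C/B}, M)$, $T^p(C/A, M) = \Ext^p_C(\ccx{C/A}, M)$, and $T^p(B/A, M) = \Ext^p_C(\ccx{B/A} \otimes^L_B C, M)$. Illusie's transitivity triangle \cite[\S II.2.1]{illusie1} for $A \to B \to C$,
\[\ccx{B/A} \otimes^L_B C \to \ccx{C/A} \to \ccx{C/B} \xrightarrow{+1},\]
is distinguished in $D(C)$. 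Applying the contravariant functor $\mathrm{RHom}_C(-, M)$ and taking cohomology produces a long exact sequence whose terms are exactly these groups, arranged as in \eqref{eqn:transles} beginning at $p = 0$; the left-hand $0$ reflects that $\mathrm{RHom}_C(\ccx{C/B}, M)$ has no cohomology in negative degrees, equivalently that a $B$-derivation of $C$ is in particular an $A$-derivation. Exactness of \eqref{eqn:transles} is thereby inherited from that of the triangle's long exact sequence.

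It then remains to match maps. The functorial arrows $\tau$ and $\rho$, which retain the underlying (2-)extension but forget the $B$-algebra structure to an $A$-algebra structure, are induced by $\ccx{C/A} \to \ccx{C/B}$, while the unlabeled horizontal maps in the lower two rows are pullback along $B \to C$ and are induced by $\ccx{B/A} \otimes^L_B C \to \ccx{C/A}$. For the boundary maps I would evaluate the connecting homomorphism of the triangle on representatives and compare with the formulas recorded before the lemma: a derivation $\theta : B \to M$ should go to the class of the trivial extension $0 \to M \to C + \epsilon M \to C \to 0$ with $B$-algebra structure twisted by $\theta$, and an extension $\zeta$ of $B$ to the trivial $2$-extension carrying $\zeta$ as its $B$-algebra structure. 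These are precisely the descriptions given for $\gamma$ and $\delta$.

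I expect the identification of $\gamma$ and $\delta$ with the connecting homomorphism of the triangle to be the main obstacle, since the abstract connecting map depends on a choice of representatives for the distinguished triangle whereas $\gamma, \delta$ are given by rigid concrete constructions. A self-contained alternative avoids the triangle and checks exactness directly at each of the nine nodes: injectivity at $\Der_B(C, M)$ is immediate, the complex relations $\rho\delta = 0$ and $\tau\gamma = 0$ (and their analogues) have already been observed, and the remaining containments of kernels in images can be verified by the local splitting arguments of Proposition \ref{prop:isomdMgerbe}, exploiting that $\gamma(\theta)$ and $\delta(\zeta)$ record exactly the obstruction to triviality of the induced $A$-algebra structure.
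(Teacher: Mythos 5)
First, an important piece of context: the paper does not actually prove this lemma --- its proof reads ``Omitted.'' So there is no argument of record to compare yours against; the only question is whether your proposal would itself close the gap, and as written it does not. Your main strategy (rewrite all nine terms via Theorem \ref{thm:algcohomisexal} and Proposition \ref{prop:explicitcohomisexal}, then quote the long exact sequence of Illusie's transitivity triangle) matches the terms correctly, but the step you defer --- identifying the concrete maps $\gamma$, $\delta$, $\tau$, $\rho$ and the pullback maps of \eqref{eqn:transles} with the maps induced by the distinguished triangle --- is not a loose end; it is essentially all of the mathematical content. Exactness does not transfer between two sequences with the same terms unless the comparison ladder commutes, and making it commute here requires (i) naturality of the identifications of Theorem \ref{thm:algcohomisexal} in the pair of rings (the theorem is stated for a single fixed pair $A \to B$), and (ii) tracing the connecting homomorphism of the triangle through the Lichtenbaum--Schlessinger representatives to recover the rigid formulas for $\gamma$ and $\delta$. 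The paper's own structure signals that such comparisons are substantial: the identification of \eqref{eqn:transles} with the derived-functor sequence \eqref{eqn:Tderfunctorles}, with the low-degree sequence \eqref{eqn:lowdegseq}, and of $\delta$-type classes with gerbes of deformations (Lemma \ref{lem:split=defm}) are all stated as separate results with their own proofs.

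Your fallback --- direct verification of exactness at each node --- is the right route (and almost certainly what the omitted proof intends), but it is only gestured at, and the tool you name is off target: the local splitting arguments of Proposition \ref{prop:isomdMgerbe} concern the Grothendieck topology, whereas \eqref{eqn:transles} is a sequence of honest global groups and every verification is a definitional diagram chase with butterflies, needing no topology at all. For instance, at $\Der_A(B, M)$: $\gamma(\theta) = 0$ iff the $\theta$-twisted $B$-algebra structure on $C + \epsilon M$ admits a compatible splitting, iff $\theta$ extends to an $A$-derivation $C \to M$, i.e.\ iff $\theta$ is restricted from $\Der_A(C, M)$. At $\Exal_A(C, M)$: the pullback of an extension $C'$ along $B \to C$ is split iff $B \to C$ lifts to an $A$-algebra map $B \to C'$, i.e.\ iff $C'$ carries a compatible $B$-algebra structure, i.e.\ iff it lies in the image of $\tau$. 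At the $\Exal^2$ nodes one uses that an algebra structure on a $2$-extension \emph{is by definition} a splitting of its pullback, together with the identification of splittings and automorphisms of $\bar 0$ with extensions (Remark \ref{rmk:buttaut=exal} and Lemma \ref{lem:LSequiv=isom}): a class killed by $\rho$ can be transported along a trivializing butterfly to $\bar 0$ with a $B$-algebra structure whose restriction to $A$ is trivialized, which is precisely $\delta$ of an $A$-algebra extension $\zeta$ of $B$ by $M$. Writing these chases out, with careful bookkeeping of the compatibilities of structures, yields a complete and self-contained proof; I would develop that version rather than the comparison with Illusie's triangle.
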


\begin{proof}

Omitted. 


\end{proof}

\begin{example}

Take $E = (Sets)$ for simplicity. If $C = A[S]$ is a polynomial ring over $A$, the functors $\Exal_A(C, M), \Exal_A^2(C, M)$ vanish to give isomorphisms
\begin{equation}\label{eqn:vanishingdegreeiso}
\Exal_A(B, M) \overset{\delta}{\simeq} \Exal_B^2(C, M)    
\end{equation}
for any intermediate ring $A \to B \to C$. 

A submonoid $P \subseteq \NN^k$ gives such a subalgebra
\[B = A[P] \subseteq C = A[\vec x] = A[x_1, \dots, x_k].\]
Any 2-extension of $A[P]$-algebras
\[0 \to M \to N \to R \to A[\vec x] \to 0\]
must split as a 2-extension of $A$-algebras. Assume therefore that it is $\bar 0$ with trivial $A$-algebra structure. Compatible $A[P]$-algebras on $\bar 0$ constitute extensions $\Exal_A(A[P], M)$. 

\end{example}

\subsection{Deformations}\label{ss:algdefms}

Fix the extension of algebras \eqref{eqn:fixextn}. Given an $A$-algebra $B$ and a $B$-module $M$, we want to classify algebra extensions
\begin{equation}\label{eqn:algdefms}
\begin{tikzcd}
\omega :  &0\ar[r]   &I \ar[r] \ar[d, "\varphi", swap]   &A' \ar[r] \ar[d, dashed]    &A \ar[r] \ar[d]  &0      \\
\zeta : &0\ar[r]   &M \ar[r, dashed]  &B' \ar[r, dashed]    &B \ar[r] &0.
\end{tikzcd}    
\end{equation}
The fibered category of such extensions with a fixed map $\varphi : I \to M$ is a $\dd{M}$-gerbe denoted $\DDef(-, \varphi, M)$ \cite[Proposition in \S 8]{wisealgs1}. 

Diagram \eqref{eqn:algdefms} is equivalent to an isomorphism $\varphi \smile \omega \simeq \zeta|_A$ between the pushforward of the top row and the pullback of the bottom. The stack of deformations is the pullback
\begin{equation}\label{eqn:defmsaspb}
\begin{tikzcd}
\DDef(-, \varphi, M) \ar[r] \ar[d] \pb  &* \ar[d, "\varphi \smile \omega"] \\
\EExal_{A'}(B, M) \ar[r]  &\EExal_{A'}(A, M). 
\end{tikzcd}    
\end{equation}

Algebra extensions $\EExal_{A'}(A, M)$ induce maps $\varphi : I \to M$ between the kernels, and this gives an isomorphism \cite[pg 353]{wisealgs2}
\[\EExal_{A'}(A, M) \simeq \HHom_A(I, M).\]

The extension $\varphi \smile \omega \in \EExal(A, M)$ can be thought of as an $A$-algebra structure on the trivial 2-extension of $B$ by $M$:
\[\begin{tikzcd}
 &0 \ar[r] \ar[d] &I \ar[d, "\varphi"] \ar[r] &A' \ar[r] \ar[d] &A \ar[d] \ar[r] &0 \\
0 \ar[r] &M \ar[r, equals] &M \ar[r, "0", swap] &B \ar[r, equals] &B \ar[r] &0.
\end{tikzcd}\]
Obtain a map 
\[\Hom_A(I, M) \to \Exal^2_A(B, M)\]
by sending $\varphi$ to $\varphi \smile \omega$ written in this form.

\begin{lemma}\label{lem:split=defm}

The splittings $\Split(\varphi \smile \omega)$ give a $\dd{M}$-gerbe equivalent to $\DDef(-, \varphi, M)$. The diagram
\[\begin{tikzcd}
\Hom_A(I, M) \ar[r, "\smile \omega"] \ar[dr, "\DDef", swap] \ar[dr, bend left=15, phantom]        &\Exal_A^2(B, M) \ar[d, "\sim \Split"]        \\
        &H^2_A(B, M)
\end{tikzcd}\]
commutes. 

\end{lemma}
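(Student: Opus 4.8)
The plan is to prove both assertions at once by identifying the two fibered categories directly, then reading off the cohomological statement from Proposition \ref{prop:explicitcohomisexal}. Both $\DDef(-,\varphi,M)$ and $\Split(\varphi\smile\omega)$ are already known to be $\dd{M}$-gerbes on $\Aalg/B$: the former by the cited result of \cite{wisealgs1} together with its description as the pullback \eqref{eqn:defmsaspb}, the latter by Proposition \ref{prop:isomdMgerbe}. So it suffices to produce an equivalence of fibered categories between them respecting the $\dd{M}$-bandings; the classes in $H^2_A(B,\dd{M})$ then agree automatically.

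First I would unwind what a splitting of the $2$-extension of $A$-algebras $\varphi\smile\omega$ is. Its underlying object is the trivial $2$-extension $\bar 0$ of $B$ by $M$, and by Remark \ref{rmk:buttaut=exal} an invertible butterfly $\bar 0 \simeq \bar 0$ is the same as an extension $\zeta : 0 \to M \to B' \to B \to 0$ in $\EExal(B,M)$. The extra data carried by a splitting \emph{as} $A$-algebras is a compatibility of the $A$-algebra structures: the structure on the source is $\varphi\smile\omega$ and on the target is trivial, so a splitting forces an isomorphism $\zeta|_A \simeq \varphi\smile\omega$ of extensions of $A$ by $M$. Since $\varphi\smile\omega$ lies in $\EExal_{A'}(A,M)$, this isomorphism is exactly an $A'$-algebra structure on $B'$ restricting to $\varphi\smile\omega$ over $A$. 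By the remark preceding \eqref{eqn:defmsaspb}, the data of such a $\zeta$ together with $\zeta|_A \simeq \varphi\smile\omega$ is precisely a deformation diagram \eqref{eqn:algdefms}.

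The functor $\Split(\varphi\smile\omega) \to \DDef(-,\varphi,M)$ sending a splitting to the resulting deformation $B'$ is thus an equivalence; formally it identifies the homotopy-pullback description of $\Split(\varphi\smile\omega)$, coming from the definition of $\EExal^2_A(B,M)$ as the homotopy fiber of $\EExal^2(B,M) \to \EExal^2(A,M)$ over $\bar 0$, with the pullback square \eqref{eqn:defmsaspb} defining $\DDef$. One must check the functor respects the bandings: automorphisms of a splitting are $A$-derivations $B \to M = \dd{M}(B)$ by the computation in the proof of Proposition \ref{prop:isomdMgerbe}, and these match the banding of $\DDef$ by $\dd{M}$ under the correspondence. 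This bookkeeping --- verifying that splitting the $A$-algebra $2$-extension converts the $A$-structure $\varphi\smile\omega$ into an $A'$-structure on $B'$ and that the two pullback squares coincide on the nose, including the $\dd{M}$-actions --- is the main obstacle; everything else is formal.

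Finally, the triangle. By Proposition \ref{prop:explicitcohomisexal} the isomorphism $\Exal^2_A(B,M) \simeq H^2_A(B,\dd{M})$ sends a $2$-extension $\xi$ to the class of its $\dd{M}$-gerbe of splittings $\Split(\xi)$. Hence the composite $\Hom_A(I,M) \xrightarrow{\smile\omega} \Exal^2_A(B,M) \xrightarrow{\sim\ \Split} H^2_A(B,\dd{M})$ sends $\varphi$ to $[\Split(\varphi\smile\omega)]$, while $\DDef$ sends $\varphi$ to $[\DDef(-,\varphi,M)]$. The equivalence of $\dd{M}$-gerbes established above gives $[\Split(\varphi\smile\omega)] = [\DDef(-,\varphi,M)]$ in $H^2_A(B,\dd{M})$, so the triangle commutes.
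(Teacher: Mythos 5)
Your proposal is correct and takes essentially the same route as the paper's proof: you unwind a splitting of $\varphi \smile \omega$ into an extension $\zeta$ via Remark \ref{rmk:buttaut=exal} together with an isomorphism $\zeta|_A \simeq \varphi \smile \omega$, identify this data with the pullback square \eqref{eqn:defmsaspb} defining $\DDef(-, \varphi, M)$, and check that the identification respects the $\dd{M}$-bandings. Your explicit final step deducing the commutativity of the triangle from Proposition \ref{prop:explicitcohomisexal} is only left implicit in the paper, but it is the same argument.
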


\begin{proof}

Splittings of $\bar 0$ entail extensions $\EExal_{A'}(B, M)$. To make sure such a splitting has correct $A$-algebra structure $\omega$, one restricts to $\EExal_{A'}(A, M)$. This is precisely Diagram \eqref{eqn:defmsaspb}. This identification of gerbes is banded by the identity $\dd{M} \longequals \dd{M}$ because it lies over equivariant inclusions of each gerbe in $\EExal_{A'}(B, M)$. 

\end{proof}

\begin{remark}

One can obtain the same map $\Hom_A(I, M) \to \Exal_A^2(B, M)$ as in \cite[\S 4]{mymoduledefms}. Choose a flat $A'$-algebra with a surjection $P \to B$:
\[0 \to K \to P \to B \to 0,\]
writing $K$ for the kernel. Tensor by $-\otimes_{A'} A$ and write $\bar K = K \otimes_{A'} A$, $\bar P = P \otimes_{A'} A$:
\[\eta : \qquad 0 \to I \otimes_{A'} B \to \bar K \to \bar P \to B \to 0.\]
One gets a map $\Hom_A(I, M) \to \Exal_A^2(B, M)$ independent of choices by sending $\varphi : I \to M$ to the pushout $\varphi \smile \eta$ of $\eta$ along $\varphi$. The diagram 
\[\begin{tikzcd}
 &0 \ar[r] \ar[d] &K \ar[r] \ar[d] &P \ar[r] \ar[d] &B \ar[r] \ar[d, equals] &0 \\
0 \ar[r]  &I \otimes_{A'} B \ar[r]  &\bar K \ar[r]  &\bar P \ar[r]  &B \ar[r]  &0
\end{tikzcd}\]
gives a splitting of $\eta$. The same trivializes the module 2-extension in \cite[\S 4]{mymoduledefms} as groups, but not as $A$-modules. An early draft of this article used a similar approach to defining the map $\delta$ in the transitivity sequence \eqref{eqn:transles}. 

The triangle of Lemma \ref{lem:split=defm} \emph{anti}-commutes in that case as in \cite[Theorem 1.6]{mymoduledefms}. This minus sign reflects a choice in identifying $\Split(\bar 0) \simeq \Exal_{A'}(B, M)$: one can compose splittings in either order. 

\end{remark}

\subsection{Right Derived Functor Sequence}

We get a long exact sequence
\begin{equation}\label{eqn:Tderfunctorles}
0 \to H^1_A(B, M) \to H^1_{A'}(B, M) \to H^1_{A'}(A, M) \to H^2_A(B, M)
\end{equation}
of derived functors. The sequence is exact on the left because the previous term in the sequence vanishes $H^0_{A'}(A, M) = 0$. This also equates two derivations 
\begin{equation}\label{eqn:derequality}
\Der_A(B, M) = \Der_{A'}(B, M).    
\end{equation}
 
We can also apply the transitivity sequence to $A' \to A \to B$.

\begin{lemma}
The long exact sequence \eqref{eqn:transles} for $A' \to A \to B$ coincides with \eqref{eqn:Tderfunctorles} under the isomorphisms of Theorem \ref{thm:algcohomisexal}.
\end{lemma}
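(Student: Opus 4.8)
The plan is to substitute the tower $A' \to A \to B$ into the transitivity sequence \eqref{eqn:transles}, rewrite every term as a cohomology group via Theorem \ref{thm:algcohomisexal} and Proposition \ref{prop:explicitcohomisexal}, and then verify that the resulting maps agree with those of \eqref{eqn:Tderfunctorles} one at a time. Under the substitution $(A,B,C) \rightsquigarrow (A',A,B)$, the functors $\Der_{A'}(A,M)$, $\Exal_{A'}(A,M)$, $\Exal^2_{A'}(A,M)$ become $H^0_{A'}(A,M)$, $H^1_{A'}(A,M)$, $H^2_{A'}(A,M)$, and similarly the $A$- and $A'$-structures on $B$ produce the groups $H^p_A(B,M)$ and $H^p_{A'}(B,M)$. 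Since $A' \to A$ is surjective, every $A'$-derivation of $A$ vanishes, so $H^0_{A'}(A,M) = \Der_{A'}(A,M) = 0$; combined with exactness (Lemma \ref{lem:exacttransles}) this is exactly what makes the sequence left-exact at $H^1_A(B,M)$, so the relevant four-term portion of the substituted \eqref{eqn:transles} has the same objects as \eqref{eqn:Tderfunctorles}. It then remains to match three maps: the structure-change map $\tau : H^1_A(B,M) \to H^1_{A'}(B,M)$, the restriction map $H^1_{A'}(B,M) \to H^1_{A'}(A,M)$, and the connecting map $\delta$.

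Next I would identify \eqref{eqn:Tderfunctorles} with the long exact $\Ext_B(-,M)$-sequence of the cotangent-complex transitivity triangle $\ccx{A/A'} \otimes_A^L B \to \ccx{B/A'} \to \ccx{B/A}$, using Theorem \ref{thm:algcohomisexal} to write $H^p_A(B,M) = \Ext^p_B(\ccx{B/A},M)$, $H^p_{A'}(B,M) = \Ext^p_B(\ccx{B/A'},M)$ and, by adjunction, $H^p_{A'}(A,M) = \Ext^p_A(\ccx{A/A'},M)$. The two non-connecting maps then match formally: $\tau$ relaxes an $A$-structure to an $A'$-structure, which is exactly the map induced by the canonical morphism $\ccx{B/A'} \to \ccx{B/A}$; and the restriction map is pullback of an extension along $A \to B$, which is the map induced by $\ccx{A/A'}\otimes_A^L B \to \ccx{B/A'}$. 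Both of these are instances of naturality of the isomorphisms of Proposition \ref{prop:explicitcohomisexal} in the ring maps, which holds because the gerbe-of-splittings construction commutes with change of base ring and with pullback and pushout of extensions.

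The crux is the connecting map $\delta : H^1_{A'}(A,M) \to H^2_A(B,M)$. It and the triangle boundary $\partial$ are both natural in the coefficient module, so, using $\Exal_{A'}(A,M) \simeq \Hom_A(I,M)$ and the fact that every class is the pushout $\varphi \smile \omega$ of the universal square-zero extension $\omega$ of \eqref{eqn:fixextn} along some $\varphi : I \to M$, it suffices to compare $\delta$ and $\partial$ on $\omega$ itself (with coefficients $I$ and $\varphi = \mathrm{id}_I$) and then push forward. By its explicit description, $\delta(\omega)$ is the trivial 2-extension of $B$ carrying $\omega$ as its $A$-algebra structure — precisely the class built in Lemma \ref{lem:split=defm} — whereas $\partial(\omega)$ is the image of $\omega$ under composition with the boundary morphism $w : \ccx{B/A} \to (\ccx{A/A'}\otimes_A^L B)[1]$ of the transitivity triangle. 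I expect the main obstacle to be checking that these agree: one must unwind $w$ and recognize it as the obstruction to lifting the $A$-structure across $\omega$, which is exactly the datum recorded by $\delta(\omega)$. Should this direct comparison prove delicate, it can be bypassed: because $H^0_{A'}(A,-) = 0$ and the functors $H^p_A(B,-)$, $H^p_{A'}(A,-)$ are derived functors, hence effaceable, a standard dimension-shifting (universal $\delta$-functor) argument forces any natural connecting map compatible with the already-matched functorial maps to be unique, whence $\delta = \partial$ and the two long exact sequences coincide.
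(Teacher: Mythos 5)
Much of your outline tracks the paper's own proof: the terms match, the non-connecting maps are handled by checking that splittings of an extension are sent to the torsor of splittings of its image (naturality of the isomorphisms of Proposition \ref{prop:explicitcohomisexal}), and your reduction of the connecting map to the universal class $\omega$, with $\delta(\omega)$ identified via Lemma \ref{lem:split=defm}, is exactly the role that lemma plays in the paper. The gap is at the step you yourself flag as the crux and then bypass: showing that the connecting map $\partial$ of \eqref{eqn:Tderfunctorles} also sends $\omega$ to the class of $\DDef(-,\varphi,M)$. Your fallback ``universal $\delta$-functor / dimension-shifting'' argument is not valid. Exactness together with naturality in $M$ does not determine a connecting map: if $\partial$ is natural and makes the sequence exact, then so is $-\partial$ (same kernel, and the same image because the image is a subgroup), and more generally so is $u\circ\partial$ for any natural automorphism $u$ of $H^2_A(B,-)$ preserving $\ker\bigl(H^2_A(B,-)\to H^2_{A'}(B,-)\bigr)$. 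Effaceability gives uniqueness of a \emph{morphism of $\delta$-functors} extending a fixed degree-zero map; it does not say that two $\delta$-structures on the same functors, agreeing on all non-connecting maps, have equal connecting maps --- that is precisely what can fail, and it fails at least up to sign. Signs are a live issue in this paper: butterfly composition requires an anticommutativity convention, and the remark following Lemma \ref{lem:split=defm} records that the analogous triangle in \cite[Theorem 1.6]{mymoduledefms} \emph{anti}-commutes for reasons of exactly this kind.

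So the direct comparison (your ``Route A'') cannot be skipped; it is the actual content of the lemma. The paper avoids unwinding the cotangent-complex boundary by never leaving the gerbe picture: by Proposition \ref{prop:explicitcohomisexal}, each map of \eqref{eqn:Tderfunctorles} sends a class to the torsor/gerbe of its local lifts to the previous term, and for the connecting map this gerbe of lifts \emph{is} $\DDef(-,\varphi,M)$, since a local lift of $\varphi\smile\omega$ to $\EExal_{A'}(B,M)$ is by definition a deformation \eqref{eqn:algdefms}; Lemma \ref{lem:split=defm} then identifies this gerbe with $\Split(\delta(\varphi\smile\omega))$, which is what ``compatibility of the last map'' means in the paper's proof. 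If you instead insist on defining $\partial$ as the boundary of the triangle $\ccx{A/A'}\otimes^L_A B \to \ccx{B/A'} \to \ccx{B/A}$, then equating $\partial(\omega)$ with the deformation obstruction class is Illusie's comparison theorem --- a genuine computation, not a formal consequence of exactness --- so your proof as written is incomplete at its central step.
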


\begin{proof}

By Proposition \ref{prop:explicitcohomisexal}, each map sends an object to the torsor/gerbe of local lifts to the previous term. Lemma \ref{lem:split=defm} checks compatibility of the last map. The others check that splittings of an extension $\zeta$ of algebras get sent to the torsor of splittings of the image of $\zeta$. 

\end{proof}

We answer Question \ref{q:howmanydefms?} as per tradition:

\begin{theorem}[{\cite[Theorem 4.3.3]{lichtenbaumschlessinger}, \cite[Proposition 2.1.2.3]{illusie1}}]

Consider the deformation problem \eqref{eqn:algdefms}. There exists a deformation $B'$ if and only if the class $\varphi \smile \omega \in \Exal^2_A(B, M)$ vanishes. If so, deformations $B'$ form a principal homogeneous set under $\Exal_A(B, M)$. Automorphisms of a single deformation correspond to derivations $\Der_A(B, M)$.

\end{theorem}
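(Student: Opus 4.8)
The plan is to translate the entire statement into the cohomological framework already assembled, so that each of the three assertions becomes a formal consequence of the gerbe/torsor dictionary of Proposition \ref{prop:explicitcohomisexal} together with the identifications of Lemma \ref{lem:split=defm}. The key observation is that everything has already been set up: by Lemma \ref{lem:split=defm} the stack of deformations $\DDef(-, \varphi, M)$ is equivalent, as a $\dd{M}$-gerbe, to the gerbe of splittings $\Split(\varphi \smile \omega)$, and its class in $H^2_A(B, \dd{M}) \simeq \Exal^2_A(B, M)$ is precisely $\varphi \smile \omega$. So the structural content of the theorem is just the standard fact that a gerbe banded by an abelian sheaf $\dd{M}$ has a global section (is neutral) if and only if its class in $H^2$ vanishes; that its global sections, when nonempty, form a torsor under $H^1_A(B, \dd{M}) \simeq \Exal_A(B, M)$; and that automorphisms of a given object are $H^0_A(B, \dd{M}) = \Der_A(B, M)$.

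The steps, in order, would be as follows. First I would invoke Lemma \ref{lem:split=defm} to replace $\DDef(-, \varphi, M)$ by $\Split(\varphi \smile \omega)$ and identify its class with $\varphi \smile \omega \in \Exal^2_A(B, M) \simeq H^2_A(B, \dd{M})$. Second, for the existence statement, I would note that $\Split(\varphi \smile \omega)$ is a $\dd{M}$-gerbe by Proposition \ref{prop:isomdMgerbe}, and a gerbe is neutral (admits a global section) exactly when its cohomology class vanishes; a global section of $\Split(\varphi \smile \omega)$ is precisely a splitting of $\varphi \smile \omega$, equivalently a deformation $B'$ by the equivalence of gerbes. Third, for the homogeneous-space statement, I would use that the global objects of a neutral $\dd{M}$-gerbe form a torsor under $H^1_A(B, \dd{M})$; translating back through $H^1_A(B, \dd{M}) \simeq \Exal_A(B, M)$ gives the action of algebra extensions on deformations. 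Fourth, the automorphism statement follows because automorphisms of any object of a $\dd{M}$-gerbe are canonically $\Gamma_A(B, \dd{M}) = H^0_A(B, \dd{M}) = \Der_A(B, M)$; alternatively one reads this off directly from Remark \ref{rmk:buttaut=exal} and the computation of automorphisms of the trivial extension as derivations used in the proof of Proposition \ref{prop:isomdMgerbe}.

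The only genuine care-point, rather than a hard obstacle, is bookkeeping the band: one must confirm that the equivalence of gerbes in Lemma \ref{lem:split=defm} is banded by the \emph{identity} $\dd{M} \longequals \dd{M}$ (which that lemma explicitly records), so that the torsor and automorphism structures transport without an intervening twist or sign. I expect the principal-homogeneous-set claim to require the most attention, since I want the action to be the one induced by pushing out/adding extensions along the Picard structure on $\EExal_A^2(B, M)$, and I should check this matches the abstract $H^1$-action on the fiber of the gerbe; this is the step where an orientation/sign choice (analogous to the anti-commuting triangle noted after Lemma \ref{lem:split=defm}) could slip in. All three assertions are then immediate instances of the general neutral-gerbe dictionary, so beyond fixing conventions there is no substantial new argument to make.
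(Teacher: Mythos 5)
Your proposal is correct and is exactly the argument the paper intends: the theorem is stated without proof precisely because it is an immediate consequence of Lemma \ref{lem:split=defm} (identifying $\DDef(-, \varphi, M)$ with $\Split(\varphi \smile \omega)$ as $\dd{M}$-gerbes), Proposition \ref{prop:isomdMgerbe}, and Proposition \ref{prop:explicitcohomisexal}, combined with the standard dictionary for gerbes banded by an abelian sheaf (neutral iff the $H^2$ class vanishes, global objects a torsor under $H^1$, automorphisms given by $H^0$). Your care about the band being the identity $\dd{M} \longequals \dd{M}$ is the same point the paper records at the end of Lemma \ref{lem:split=defm}, so nothing further is needed.
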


\subsection{Spectral sequences}

Continue to fix a squarezero algebra extension $\omega$ \eqref{eqn:fixextn} and an $A$-algebra $B$. Write $\Pi$ for the functor
\[\Pi : \Aalg/B \to \AAalg/\Pi B\]
that sends an $A$-algebra $C$ over $B$ to the same ring regarded as an $A'$-algebra or suppress notation $B = \Pi B$. We obtain a morphism of sites by precomposition
\[\pi : \AAalg/\Pi B \to \Aalg/B; \qquad F \mapsto F(\Pi(-))\]
because $\Pi$ is cover-preserving.

Given an $A$-module $M$, derivations as $A'$- and $A$-algebras are the same by \eqref{eqn:derequality}:
\[\Gamma_A(B, \pi_*\dd{M}) = \Gamma_{A'}(B, \dd{M}).\]
The equality of derived functors $R\Gamma_A \circ R\pi_* = R\Gamma_{A'}$ yields a spectral sequence
\[H^p_A(B, R^q \pi_* \dd{M}) \Rightarrow H^{p+q}_{A'}(B, \dd{M}).\]
We get an exact sequence of low degree terms
\begin{equation}\label{eqn:lowdegseq}
0 \to H^1_A(B, \pi_*\dd{M}) \to H^1_{A'}(B, \dd{M}) \to \Gamma_A(B, R^1\pi_* \dd{M}) \to H^2_A(B, \pi_*\dd{M}) \to H^2_{A'}(B, \dd{M}),    
\end{equation}
which can be identified with a sequence of algebra extensions:
\begin{equation}\label{eqn:exallesfromss}
0 \to \Exal_A(B, M) \to \Exal_{A'}(B, M) \to \Gamma_A(B, R^1 \pi_* \dd{M}) \to \Exal^2_A(B, M) \to \Exal^2_{A'}(B, M).    
\end{equation}

The spectral sequence is natural in $B \in \Aalg$, so the exact sequence is as well. Sheafify with respect to $B$ to kill off extensions and 2-extensions of $A$-algebras:
\[(\pi_*\Exal_{A'}(-, M))^{sh} \simeq \Gamma_A(-, R^1\pi_* \dd{M}).\]
The transitivity sequence \eqref{eqn:transles} for $A' \to A \to B$ is similarly natural in $B$, and sheafifying yields an isomorphism:
\[\Exal_{A'}(-, M)^{sh} \simeq \HHom_{A'}(I, M).\]
We have shown the two sequences are equivalent:

\begin{corollary}
The transitivity sequence \eqref{eqn:transles} and that of low degree terms \eqref{eqn:lowdegseq} are equivalent under the isomorphisms of Theorem \ref{thm:algcohomisexal}. 
\end{corollary}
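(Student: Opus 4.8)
The goal is to show that two long exact sequences—the transitivity sequence \eqref{eqn:transles} applied to $A' \to A \to B$ and the low-degree exact sequence \eqref{eqn:lowdegseq} arising from the Leray-type spectral sequence for $\pi$—agree term-by-term and map-by-map under the identifications of Theorem \ref{thm:algcohomisexal}. The plan is to produce, for each term, a canonical isomorphism and then check that every connecting and functorial map commutes, reducing the whole comparison to a diagram of five vertical isomorphisms between two five-term exact sequences.

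First I would identify the terms. The outer terms already match: \eqref{eqn:exallesfromss} rewrites \eqref{eqn:lowdegseq} as $0 \to \Exal_A(B,M) \to \Exal_{A'}(B,M) \to \Gamma_A(B, R^1\pi_*\dd{M}) \to \Exal^2_A(B,M) \to \Exal^2_{A'}(B,M)$, using $\pi_*\dd{M} = \dd{M}$ via the derivation equality \eqref{eqn:derequality} together with Proposition \ref{prop:explicitcohomisexal} to convert cohomology into $\Exal$ and $\Exal^2$. The transitivity sequence for $A' \to A \to B$ has exactly the terms $\Exal_A(B,M)$, $\Exal_{A'}(B,M)$, $\Exal_{A'}(A,M)$, $\Exal_A^2(B,M)$, $\Exal_{A'}^2(B,M)$ in its relevant stretch (the $\Der$ rows collapse since $\Der_A(B,M) = \Der_{A'}(B,M)$). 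So the only term needing genuine identification is the middle one: I would match $\Gamma_A(B, R^1\pi_*\dd{M})$ with $\Exal_{A'}(A,M)$, which the text has already arranged by sheafifying: $(\pi_*\Exal_{A'}(-,M))^{sh} \simeq \Gamma_A(-, R^1\pi_*\dd{M})$, while transitivity gives $\Exal_{A'}(-,M)^{sh} \simeq \HHom_{A'}(I,M) \simeq \Exal_{A'}(A,M)$.

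Next I would check commutativity of the squares. The cleanest route is the one already flagged: both sequences are \emph{natural in} $B \in \Aalg$, so it suffices to compare them after sheafifying in $B$, where the $A$-algebra $\Exal$ and $\Exal^2$ groups die and only the identification of the middle term survives. By Proposition \ref{prop:explicitcohomisexal}, each arrow in either sequence is given by sending an extension or 2-extension to its $\dd{M}$-torsor or gerbe of local splittings; so the squares commute because both descriptions of a map send an object to the same gerbe of local lifts to the preceding term. I would verify the two connecting maps separately: the map $\Exal_{A'}(B,M) \to \Exal_{A'}(A,M)$ (equivalently into $\HHom_{A'}(I,M)$) is restriction along $A \to B$, matching the spectral-sequence edge map into $\Gamma_A(B, R^1\pi_*\dd{M})$; and the boundary $\Gamma_A(B,R^1\pi_*\dd{M}) \to \Exal_A^2(B,M)$ matches the transitivity map $\delta$, which by Lemma \ref{lem:split=defm} sends $\varphi$ to $\varphi \smile \omega$ viewed as an $A$-algebra structure on $\bar 0$.

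The main obstacle I anticipate is the middle-term comparison and its compatible maps: one must confirm that the edge morphism of the spectral sequence and the restriction map of the transitivity sequence agree \emph{as maps}, not merely that their sources and targets are abstractly isomorphic. Concretely, this amounts to checking that the Leray edge map $H^1_{A'}(B,\dd{M}) \to \Gamma_A(B, R^1\pi_*\dd{M})$, after the sheaf identifications, is the same homomorphism as the transitivity arrow $\Exal_{A'}(B,M) \to \Exal_{A'}(A,M)$; I would argue this by interpreting $R^1\pi_*\dd{M}$ as the sheafification of $C \mapsto H^1_{A'}(\Pi C, \dd{M}) = \Exal_{A'}(C,M)$ and observing that the edge map is induced by the unit of adjunction, which on splittings is exactly pullback along $A \to B$. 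Once that single square is verified, the five-lemma (or direct exactness, already granted by Lemma \ref{lem:exacttransles} and the spectral sequence) forces the remaining compatibilities, and the corollary follows.
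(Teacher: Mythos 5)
Your proposal is correct and takes essentially the same route as the paper: the paper's own proof is precisely the discussion preceding the corollary---naturality of both sequences in $B$, sheafification killing the $\Exal_A$ and $\Exal^2_A$ terms, and the identification of both middle terms with (sections of) the sheafification of $\Exal_{A'}(-,M)$, i.e.\ with $\HHom_{A'}(I,M) \simeq R^1\pi_*\dd{M}$. Your extra steps---recognizing the Leray edge map as the unit into the sheafification so that it matches restriction along $A \to B$, and invoking Lemma \ref{lem:split=defm} for the square involving $d_2$ and $\delta$---simply make explicit the map-level compatibilities that the paper compresses into ``We have shown the two sequences are equivalent.''
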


\appendix

\section{Fixing the flaw}\label{a:fixflaw}
\smallskip
\begin{center}Joint with \textsc{Jonathan Wise}\end{center}
\medskip

Theorem \ref{thm:algcohomisexal} computes the cohomology of the sheaf of derivations $\dd{M}$ on the site $\Aalg/B$. There is a small error in the proof explained in Example \ref{freeproductnoiso}. We fix this flaw. 

\begin{theorem}[{\cite[Theorem 4]{wisealgs1}}]\label{thm:algcohomisexal}
Cohomology of the sheaf $\dd{M}$ on $\Aalg/B$ corresponds to the Lichtenbaum-Schlessinger functors $T^0, T^1, T^2, \dots$;
i.e., to $\Ext$ with the cotangent complex: 
\[
\begin{split}
H^p(\Aalg/B, \dd{M}) &= \Ext^p_B(\ccx{B/A}, M)  \\
    &=T^p(B/A, M).
\end{split}
\]
\end{theorem}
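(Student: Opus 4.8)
The plan is to compute $H^p(\Aalg/B, \dd{M})$ directly from a free simplicial resolution of $B$, exploiting that the topology of Definition \ref{defn:wisetop} makes polynomial algebras $\dd{M}$-acyclic. First I would record that on a polynomial $A$-algebra $P = A[S]$ one has $\Der_A(P, M) = \Hom_P(\Omega_{P/A}, M)$ with $\Omega_{P/A}$ free, and, crucially, that $\dd{M}$ has no higher cohomology on the localized site $\Aalg/P$. This acyclicity is exactly what I would extract from Theorem \ref{thm:injdervanishing}; it is the replacement for the step that fails in \cite{wisealgs1}.

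Next I would take a simplicial resolution $P_\bullet \to B$ of $B$ by polynomial $A$-algebras — the bar resolution of the free/forgetful comonad, which is the standard object computing $\ccx{B/A}$ — and verify that it is a hypercover of the terminal object of $\Aalg/B$: the augmentation $P_0 \to B$ and each matching map $P_{n+1} \to (\cosk_n P_\bullet)_{n+1}$ are surjections from free algebras that lift finite sets of sections, hence covers in this topology. The descent spectral sequence for this hypercover reads
\[E_1^{p,q} = H^q(\Aalg/P_p, \dd{M}) \Rightarrow H^{p+q}(\Aalg/B, \dd{M}),\]
and the acyclicity from the first step collapses it onto the row $q = 0$, giving $H^p(\Aalg/B, \dd{M}) = H^p\big(\Der_A(P_\bullet, M)\big)$.

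It then remains to match the right-hand side with the target. By the construction of the cotangent complex as $\ccx{B/A} = \Omega_{P_\bullet/A} \otimes_{P_\bullet} B$, applying $\Hom_B(-, M)$ to this termwise-free simplicial module returns the cosimplicial abelian group $\Der_A(P_\bullet, M)$, so its cohomology is $\Ext^p_B(\ccx{B/A}, M) = T^p(B/A, M)$ by the standard simplicial definition of André–Quillen cohomology. In low degrees this recovers the truncated functors $T^0, T^1, T^2$ from a two-step presentation and is compatible with Proposition \ref{prop:explicitcohomisexal}.

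The main obstacle is precisely the hypercover step, and it is where the flaw of Example \ref{freeproductnoiso} lives. The temptation is to replace the full simplicial resolution by the \v Cech nerve of a single polynomial cover $P \to B$, whose terms are iterated free products (coproducts) of $P$; Example \ref{freeproductnoiso} shows these do not induce the isomorphism on derivations that the shortcut silently assumes. The fix is to abandon that isomorphism entirely and lean on the acyclicity of $\dd{M}$ on polynomial algebras (Theorem \ref{thm:injdervanishing}), so that the genuine hypercover — not a one-step \v Cech cover — computes the derived functor cohomology. This is the sense in which simplicial algebra is ``sublimated into \v Cech cohomology'': the bar resolution is read as a hypercover in the topology, and its cohomology is honest sheaf cohomology.
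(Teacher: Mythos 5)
Your skeleton (polynomial hypercover plus acyclicity of $\dd{M}$ on polynomial algebras) is close in spirit to the argument the paper inherits from \cite{wisealgs1}, but the step you call ``crucial'' misquotes the one result the appendix actually supplies, and the gap is fatal in the paper's generality. Theorem \ref{thm:injdervanishing} asserts the vanishing of $H^p(\Aalg/A[S], \dd{J})$ only for an \emph{injective} $A[S]$-module $J$, and the injectivity is load-bearing: in its proof, the top row of the \v Cech comparison diagram \eqref{eqn:cechcomparison} is exact precisely because $f^*\dd{J} = J$ is injective, so its \v Cech cohomology on the hypercover $\scr U_\bullet$ in $E$ vanishes. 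For the given coefficient module $M$, which you are not free to assume injective, the claimed acyclicity $H^q(\Aalg/P_p, \dd{M}) = 0$ for $q > 0$ is false in a general topos; indeed it contradicts the very theorem you are proving, which for $B = A[x]$ asserts $H^q(\Aalg/A[x], \dd{M}) = \Ext^q_{A[x]}(\Omega_{A[x]/A}, M) = H^q(E, M)$, nonzero whenever the ambient topos has cohomology (sheaves on a circle, say). The paper warns about exactly this in the remark of \S 2: free modules and algebras on a sheaf of sets ``may still have cohomology,'' with $H^*(S, M) = \Ext^*_A(A^S, M)$ in the sense of SGA4. So your $E_1$-page does not collapse, and the spectral sequence alone does not prove the theorem.

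The same missing ingredient poisons your last step: in a topos the terms $\Omega_{P_p/A}\otimes_{P_p}B$ of $\ccx{B/A}$ are free on sheaves of sets, hence not projective, so applying $\Hom_B(-,M)$ termwise does not compute $\Ext^p_B(\ccx{B/A}, M)$. Both defects are cured by the same device, which is what Wise's proof (left ``unchanged'' by the appendix) actually uses: resolve $M$ by injective $B$-modules $J^\bullet$, form the double complex $\Der_A(P_p, J^q)$, apply Theorem \ref{thm:injdervanishing} where its hypothesis holds (to each $\dd{J^q}$ on each polynomial algebra), and compare the two spectral sequences of the double complex. The rows you wanted to kill, namely $\Ext^q_{P_p}(\Omega_{P_p/A}, M) \simeq H^q(S_p, M)$, then appear identically in both abutment computations and are matched against each other rather than made to vanish. (In $E = (\mathit{Sets})$ your argument can be patched, since sets have no higher cohomology and free modules are projective; but the theorem is internal to an arbitrary topos.) Separately, your description of the original flaw is off: the error in \cite{wisealgs1} was the claim that $A[-]$ is left exact and hence the pullback of a morphism of sites $\Aalg \dashrightarrow E^*$; the repair is not to avoid \v Cech nerves but to show the comparison maps $A[Q\times_S R] \to A[Q]\times_{A[S]}A[R]$, while not isomorphisms, are covers, which is all the \v Cech comparison in Theorem \ref{thm:injdervanishing} needs.
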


The identifications $T^p_A(B, M) \simeq \Exal^p_A(B, M)$ for $p = 1, 2$ appeared in \cite[Theorem 4.1.2, 4.2.2]{lichtenbaumschlessinger}.

\begin{remark}
The higher cohomologies $H^p(\Aalg/B, \dd{M})$ for $p > 2$ are related to Koszul homology. We hope to return to this in future work. 
\end{remark}

\subsection{Limits of free algebras}
\label{sec:limits-of-free-alg}

Let $A[-] : E \rightarrow \Aalg$ be the free algebra functor. We want to show $A[-] : E \to \Aalg$ is something like a ``homotopy equivalence'' of topoi. One cannot simply use the Comparison Lemma \cite[Theorem III.4.1]{SGA4} because the functor is not full. 

The proposition in \cite[\S 9]{wisealgs1} required a morphism of sites $\Psi : \Aalg \dashrightarrow E^*$ whose pullback functor was the free algebra functor $A[-]$.  However, this functor is not left exact, and therefore does not underlie a morphism of sites:

\begin{example}\label{freeproductnoiso}
The map $A[Q \times_S R] \rightarrow A[Q] \times_{A[S]} A[R]$ is not an isomorphism in general. The free algebra functor $A[-]$ therefore does not lead to a morphism of topoi $\Aalg \dashrightarrow E^*$ as claimed in \cite[pg. 189]{wisealgs1}. 

Let $E = (Sets)$, $A = \ZZ$ and $S = \{t\}$. Consider $Q := \{x, y\}$ and $R = \{x', y'\}$ with unique maps to $S$. Then $A[Q \times_S R] \rightarrow A[Q] \times_{A[S]} A[R]$ is not injective. For example, $(x, y) - (x, y') + (x', y') - (x', y)$ goes to zero. Hence the functor $A[-]$ need not commute with finite limits and is not left exact.

\end{example}

Write $A(S)$ for the free $A$-module on a sheaf of sets $S$. The group algebra and free module functors do not preserve products either: 
\[A[G \times H] = A[G] \otimes_A A[H],\]
\[A(Q \times_S R) \neq A(Q) \times_{A(S)} A(R).\]

\begin{example}

	Let $E = (Sets)$, $A = \ZZ$, $R = \{x, y\}$, $S = \{t\}$ and $Q$ be the empty set. The element $(0, x-y) \in A[Q] \times_{A[S]} A[R]$ does not lift to $A[Q \times_S R] = A$.  Thus $A[Q \mathop\times_S R] \to A[Q] \mathop\times_{A[S]} A[R]$ may also fail to be surjective.

\end{example}

The free algebra functor $A[-]$ is ``compatible'' with fiber products of only epimorphisms or only monomorphisms. We prove this compatibility in $E = (Sets)$ and reduce the general case to that. The danger comes from applying $A[-]$ to limits of diagrams with both monomorphisms and epimorphisms.

Write $S^\NN$ for the free commutative monoid on a sheaf $S \in E$ with monoid operation written multiplicatively. The free $A$-algebra functor is the free module on the free commutative monoid: 
\[A[S] = A(S^\NN).\]

If $Q, R \to S$ are epimorphisms in $E$, we show
\[A[Q \times_S R] \to A[Q] \times_{A[S]} A[R]\]
is covering. This will result from the analogous facts for $(-)^\NN, A(-)$.

\begin{lemma} \label{lem:module-case}
	Let $Q, R \to S$ be surjections of sets $E = (Sets)$ and let $A$ be a commutative ring.  Then the induced map of free $A$-modules $A(Q \mathop\times_S R) \to A(Q) \mathop\times_{A(S)} A(R)$ is surjective.
\end{lemma}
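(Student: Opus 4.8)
The plan is to prove surjectivity of $A(Q \times_S R) \to A(Q) \times_{A(S)} A(R)$ by directly exhibiting a preimage for an arbitrary element of the fiber product. An element of $A(Q) \times_{A(S)} A(R)$ is a pair $(u, v)$ with $u = \sum_q a_q \cdot q \in A(Q)$ and $v = \sum_r b_r \cdot r \in A(R)$ such that the two elements push forward to the \emph{same} element of $A(S)$. Since $A(-)$ is the free-module functor, an element of $A(S)$ is just a finitely-supported $A$-valued function on $S$, so the compatibility condition says precisely that for every $s \in S$ the total coefficient mass of $u$ lying over $s$ equals that of $v$ lying over $s$:
\[
\sum_{q \mapsto s} a_q = \sum_{r \mapsto s} b_r \qquad \text{for all } s \in S.
\]

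The key step is a fiber-by-fiber construction. Fix $s \in S$ and let $Q_s, R_s$ be the fibers of $Q, R$ over $s$; since $Q \to S, R \to S$ are surjections, these fibers are nonempty, which is exactly what makes the argument work. The common coefficient $c_s := \sum_{q \in Q_s} a_q = \sum_{r \in R_s} b_r$ must be distributed across the fiber product $Q_s \times R_s \subseteq Q \times_S R$ in a way that recovers the correct marginals $a_q$ over $Q$ and $b_r$ over $R$. Concretely, I would build a tensor-like element $\sum_{(q,r) \in Q_s \times R_s} c_{q,r} \cdot (q,r)$ whose row-sums over $r$ give the $a_q$ and whose column-sums over $q$ give the $b_r$. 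This is a transportation/matrix-filling problem: given prescribed nonnegative-analogue marginals summing to the same total $c_s$, produce a matrix with those marginals. One clean choice is to pick any fixed $q_0 \in Q_s$ and $r_0 \in R_s$ (possible since the fibers are nonempty) and set the ``cross'' entries $c_{q, r_0} = a_q$ for $q \neq q_0$, $c_{q_0, r} = b_r$ for $r \neq r_0$, and $c_{q_0, r_0} = c_s - \sum_{q \neq q_0} a_q - \sum_{r \neq r_0} b_r$, with all other entries zero; the marginal identities then follow directly from the compatibility $c_s = \sum a_q = \sum b_r$. Summing these local fillings over all $s \in S$ produces an element of $A(Q \times_S R)$ mapping to $(u, v)$.

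The main obstacle, and the only place surjectivity can fail, is precisely the nonemptiness of the fibers: if some fiber $Q_s$ were empty while $b_r \neq 0$ for some $r$ over $s$, there would be nothing in $Q \times_S R$ lying over that part of $R$, and no preimage could exist — this is exactly the phenomenon in the preceding example with $Q = \emptyset$. The surjectivity hypothesis on $Q \to S$ and $R \to S$ rules this out, guaranteeing $Q_s, R_s \neq \emptyset$ for every $s$ in the (finite) support, so the choice of basepoints $q_0, r_0$ is always available. I expect the verification that the constructed element has the right image to be routine bookkeeping: one checks that pushing forward to $A(Q)$ collapses each column-sum to $a_q$ and pushing forward to $A(R)$ collapses each row-sum to $b_r$, both of which are immediate from the construction. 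Note that injectivity is genuinely false (as the examples show), so it is important that the lemma claims only surjectivity; I would avoid any argument structured to prove a bijection.
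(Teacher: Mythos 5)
Your proof is correct: the fiber-by-fiber cross-filling has the right marginals (row sums $a_q$, column sums $b_r$), only finitely many $s$ contribute, and surjectivity of $Q, R \to S$ is used exactly where it must be, to choose the basepoints $q_0, r_0$ in each fiber. This is essentially the paper's argument done in one pass rather than two: the paper first subtracts off a lift of the $A(R)$-component (choosing a point of the $Q$-fiber over each relevant $s$), reducing to an element of $\ker(A(Q) \to A(S))$, which it then lifts by the element $\sum_q a_q\,(q, r(f(q)))$ for chosen $r(s)$ in the $R$-fibers; unwinding those two steps produces precisely your cross-shaped preimage.
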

\begin{proof}
Suppose $x$ lies in $A(Q) \mathop\times_{A(S)} A(R)$.  We would like to lift $x$ to $A(Q\mathop\times_S R)$.  Since $Q \mathop\times_S R \to R$ is surjective, so is $A(Q\mathop\times_S R) \to A(R)$, so we may find an element of $A(Q\mathop\times_S R)$ with the same image as $x$ in $A(R)$.  Subtracting the image of this element from $x$, we may assume that the image of $x$ in $A(R)$ is $0$.

Let $f : Q \to S$ denote the projection.  The kernel of $A(Q) \mathop\times_{A(S)} A(R) \to A(R)$ coincides with the kernel of $A(Q) \to A(S)$.  We can therefore write $x$ as $\sum a_q q$ with $a_q \in A$ and $\sum_{q \mapsto s} a_q = 0$ for each $s \in S$.  For each of the finitely many $s \in S$ that appear as images of $q$ with $a_q \neq 0$, choose $r(s) \in R$ whose image under $R \to S$ is $s$.  Then $\sum_{q \in Q} a_q (q, r(f(q)))$ lies in $A(Q \mathop\times_S R)$ and projects to $\sum_{q \in Q} a_q q = x$ in $A(Q)$, as required.
\end{proof}

\begin{lemma} \label{lem:monoid-case}
	Let $Q, R \to S$ be surjections of sets $E = (Sets)$.  The induced map of free commutative monoids 
	\[(Q \mathop\times_S R)^{\mathbf N} \to Q^{\mathbf N} \mathop\times_{S^{\mathbf N}} R^{\mathbf N}\]
	is surjective.
\end{lemma}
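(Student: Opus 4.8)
The plan is to prove surjectivity of the monoid map by analyzing what an element of the fiber product $Q^{\mathbf N} \mathop\times_{S^{\mathbf N}} R^{\mathbf N}$ looks like and lifting it termwise. An element of the free commutative monoid $Q^{\mathbf N}$ is a finite formal product $\prod q_i^{n_i}$, i.e.\ a finitely-supported function $Q \to \mathbf N$. A pair $(u, v) \in Q^{\mathbf N} \times R^{\mathbf N}$ lies in the fiber product precisely when $u$ and $v$ have the same image in $S^{\mathbf N}$; since the map to $S^{\mathbf N}$ is the monoid homomorphism induced by $Q \to S$ and $R \to S$, this image condition says that for each $s \in S$, the total multiplicity of elements of $Q$ mapping to $s$ equals the total multiplicity of elements of $R$ mapping to $s$.

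First I would fix such a pair $(u,v)$ and, for each $s \in S$, let $d_s$ denote this common multiplicity over $s$ (nonzero for only finitely many $s$). The task reduces, one fiber $s$ at a time, to the following combinatorial claim: given a multiset of size $d_s$ drawn from the fiber $Q_s = Q \times_S \{s\}$ and a multiset of the same size $d_s$ drawn from $R_s$, I must produce a multiset of size $d_s$ drawn from $Q_s \times R_s$ whose two marginal projections recover the given multisets. Because $Q, R \to S$ are surjective, both fibers $Q_s$ and $R_s$ are nonempty, so this is a question of matching two multisets of equal cardinality. I would simply line up the $d_s$ chosen elements of $Q_s$ (with repetition) against the $d_s$ chosen elements of $R_s$ (with repetition) in any bijection of the underlying index sets $\{1, \dots, d_s\}$, and take the corresponding pairs in $(Q \times_S R)_s = Q_s \times R_s$. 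This matching does not even require a choice function of the kind used in Lemma~\ref{lem:module-case}; it is the much easier multiset version of the same idea, since here everything is a nonnegative integer count rather than a signed coefficient that must cancel.

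Assembling the fiberwise matchings over the finitely many relevant $s$ produces an element of $(Q \mathop\times_S R)^{\mathbf N}$, and by construction its images in $Q^{\mathbf N}$ and $R^{\mathbf N}$ are exactly $u$ and $v$, giving the desired lift. I would then remark that, unlike the module case where signed cancellation forced the argument through a kernel computation, the monoid case is purely a statement about balancing equal totals over each fiber.

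The step I expect to require the most care is the bookkeeping that the marginals of the chosen pairing are correct: one must verify that summing over the pairs $(q_i, r_i)$ recovers $\prod q_i^{n_i}$ on the $Q$-side and $\prod r_j^{m_j}$ on the $R$-side, which amounts to checking that the projection of the index-bijection multiset onto each factor recovers the original multiset. This is genuinely just the tautology that projecting a multiset of matched pairs onto either coordinate returns the coordinate's multiset, so I would state it cleanly rather than belabor it. No genuine obstacle arises, because surjectivity of $Q, R \to S$ guarantees every fiber is inhabited and the common-multiplicity condition guarantees equal cardinalities to match; the whole content is that free commutative monoids on sets behave like multisets and multisets of equal size over a common base can always be paired.
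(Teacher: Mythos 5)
Your proof is correct and takes essentially the same approach as the paper: the paper writes the two coordinates of an element of the fiber product as equal-length products and, ``after reordering,'' pairs off terms with equal image in $S$, which is exactly your fiberwise multiset matching, so the pairing $\prod (q_i, r_i)$ gives the lift. (A minor remark: surjectivity of $Q, R \to S$ is not actually needed for this step, since the terms occurring in $u$ and $v$ already witness that the relevant fibers are nonempty.)
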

\begin{proof}
Let $\prod_{i = 1}^n q_i$ and $\prod_{j=1}^m r_j$ have the same image in $S^{\mathbf N}$.  Then we must have $n = m$ and, after reordering, we can assume that $q_i$ and $r_i$ have the same image in $S$.  Then $\prod (q_i, r_i)$ is an element of $(Q \mathop\times_S R)^{\mathbf N}$ that projects to $\prod q_i \in Q^{\mathbf N}$ and to $\prod r_i \in R^{\mathbf N}$.
\end{proof}

\begin{lemma} \label{lem:ring-case}
	Let $Q, R \to S$ be surjections of sets $E = (Sets)$ and $A$ a commutative ring.  The ring homomorphism
	\begin{equation*}
		A[Q \mathop\times_S R] \to A[Q] \mathop\times_{A[S]} A[R]
	\end{equation*}
	is surjective.
\end{lemma}
\begin{proof} 
	By Lemmas~\ref{lem:module-case} and~\ref{lem:monoid-case}, the composition
	\begin{equation*}
		A[Q \mathop\times_S R] = A((Q \mathop\times_S R)^{\mathbf N}) \to A(Q^{\mathbf N} \mathop\times_{S^{\mathbf N}} R^{\mathbf N}) \to A(Q^{\mathbf N}) \mathop\times_{A (S^{\mathbf N})} A(R^{\mathbf N}) = A[Q] \mathop\times_{A[S]} A[R]
	\end{equation*}
	is surjective.
\end{proof}

Return to an arbitrary topos $E$.

\begin{proposition}\label{coveringfreeproduct}
	Take epimorphisms $Q, R \to S$ in a topos $E$.  The map 
	\begin{equation} \label{eqn:1}
A[Q \times_S R] \to A[Q] \times_{A[S]} A[R]
	\end{equation}
is covering. 

\end{proposition}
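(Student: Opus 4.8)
The plan is to show that \eqref{eqn:1} is \emph{locally surjective}, i.e. an epimorphism of sheaves, and to observe that for a single morphism this is the same as being covering in the sense of Definition~\ref{defn:wisetop}. Indeed, unwinding that definition for one map $C \to D$, covering means that every finite set $\Lambda = \{x_1, \dots, x_n\} \subseteq \Gamma(U, D)$ lifts to $\Gamma(V, C)$ along some epimorphism $V \to U$. If each single section $x_k$ lifts over an epimorphism $V_k \to U$, then it also lifts over the fiber product $V = V_1 \times_U \cdots \times_U V_n$, which is again an epimorphism onto $U$ (pullbacks and composites of epimorphisms are epimorphisms in a topos); so local surjectivity of $C \to D$ already forces the whole finite set $\Lambda$ to lift over $V$. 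It therefore suffices to prove that \eqref{eqn:1} is locally surjective.

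Next I would reduce the local surjectivity to the set-theoretic statements already proven. Using $A[S] = A(S^\NN)$ and that the free monoid sheaf $(-)^\NN$ and the free module sheaf $A(-)$ are built by sheafification, a section of $A[Q]$ (resp. $A[R]$, $A[S]$) over an object is, after passing to a cover, an explicit finite $A$-linear combination of monomials in sections of $Q$ (resp. $R$, $S$). Given a finite set $\Lambda$ of sections of the target $A[Q] \times_{A[S]} A[R]$ over $U$, I would first refine $U$ to an epimorphism $V \to U$ over which every section in $\Lambda$ is presented by such explicit combinations, with the two components agreeing termwise over $A[S]$. This involves only finitely many sections of $Q$, of $R$, and of $S$.

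Finally, because $Q \to S$ and $R \to S$ are epimorphisms, the finitely many relevant sections of $S$ appearing over $V$ admit lifts to $Q$ and to $R$ after a further refinement $W \to V$; taking $W \to U$ to be the resulting epimorphism, one has, over $W$, exactly the choices of preimages used in the proofs of Lemmas~\ref{lem:module-case}, \ref{lem:monoid-case}, and \ref{lem:ring-case}. Running those constructions verbatim over $W$ with $A(W)$-coefficients produces a section of $A[Q \times_S R](W)$ restricting to the given one, which is the required local lift. The main obstacle is the bookkeeping in this reduction: passing from the topos-level sections to set-level data and arranging that a single epimorphism $W \to U$ simultaneously trivializes all sections in $\Lambda$ and supplies all the needed preimages along $Q \to S$ and $R \to S$. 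Once this common refinement is in place, the argument is identical to the $(Sets)$ computation of Lemma~\ref{lem:ring-case}.
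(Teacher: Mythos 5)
Your proof is correct and takes essentially the same route as the paper: the paper also reduces ``covering'' to showing \eqref{eqn:1} is an epimorphism of sheaves, then presents a given section of $A[Q] \times_{A[S]} A[R]$ over a cover by data drawn from finite (constant) sets of sections of $Q$, $R$, $S$, and invokes Lemma~\ref{lem:ring-case}. Your extra bookkeeping---intersecting covers by fiber products to lift a finite set of sections, and using the epimorphisms $Q, R \to S$ to lift the finitely many relevant sections of $S$---is exactly what the paper's reduction to constant sheaves leaves implicit.
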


\begin{proof}
	We must show that~\eqref{eqn:1} is surjective.  If $x$ is any section of $A[Q] \mathop\times_{A[S]} A[R]$ over an object $U$ of $E$ then there is a cover $U'$ of $E$ and finite sets $Q'$, $R'$, and $S'$ fitting into a commutative diagram~\eqref{eqn:2} such that $x$ is induced from $A[Q'] \mathop\times_{A[S']} A[R']$:
	\begin{equation} \begin{tikzcd}
		\label{eqn:2}
		Q'_{U'} \ar[r] \ar[d] & S'_{U'} \ar[d] & R'_{U'} \ar[l] \ar[d] \\
		Q \big|_{U'} \ar[r] & S \big|_{U'} & R \big|_{U'} \ar[l]
	\end{tikzcd}
	\end{equation}
	The subscript ${-}_{U'}$ above denotes a constant sheaf over $U'$ and $-\big|_{U'}$ denotes restriction to $U'$.  Replacing $E$ with the slice $E/U'$ and replacing $Q$, $R$, and $S$ with $Q'$, $R'$, and $S'$, the lemma reduces to the case where $Q$, $R$, and $S$ are all constant sheaves, which is Lemma~\ref{lem:ring-case}.
\end{proof}

\begin{example}\label{ex:freealgequalizers}

The functor $A[-]$ does not take all finite limits of epimorphisms to covers, just fiber products: letting $Q = \lim (R \rightrightarrows S)$ be the equalizer, the induced map $A[Q] \to \lim (A[R] \rightrightarrows A[S])$ may fail to be covering. For example, take $E = (\mathit{Sets})$, $A = \ZZ$, $R = S = \{x, y\}$ and the maps $R \rightrightarrows S$ given by the identity and the map swapping $x$ and $y$. The equalizer of sets $Q = \varnothing$ is the empty set and $\ZZ[Q] = \ZZ$. The equalizer of algebras consists of symmetric functions such as $x + y$ in the variables $x, y$. 

\end{example}

\begin{remark}

The functor taking sheaves to free algebras $S \mapsto A[S]$ preserves fiber products of \emph{injections} of sheaves. It does not preserve all limits of injections of sheaves as in Example \ref{ex:freealgequalizers}. 

Presenting the equalizer $Q$ of the example as $R \times_{S \times S} S$, we get $A[Q] = A[R] \times_{A[S \times S]} A[S]$. This differs from $A[R] \times_{A[S] \times A[S]} A[S]$, the equalizer of the algebras. 

\end{remark}

We get a morphism of sites from $A[-]$ in the other direction. 

\begin{proposition}
The free algebra functor $F : E^\ast \rightarrow \Aalg$ is cocontinuous. 
\end{proposition}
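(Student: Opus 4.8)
The plan is to verify cocontinuity directly from the definition: given an object $S \in E^\ast$ and a covering family $\{D_j \to A[S]\}$ of $A[S]$ in $\Aalg$ (in the sense of Definition \ref{defn:wisetop}), I must produce a covering family $\{S_i \to S\}$ of $S$ in $E^\ast$ whose image $\{A[S_i] \to A[S]\}$ refines $\{D_j \to A[S]\}$. The conceptual point is that cocontinuity asks only for the lifting of covers, not for left exactness of $A[-]$; this is why it succeeds where the proposed morphism of sites of Example \ref{freeproductnoiso} failed.

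For the construction, I would first pull each algebra $D_j$ back along the unit $\eta_S : S \to A[S]$ of the free-algebra adjunction. Set
\[ S_j := S \times_{A[S]} D_j, \]
the fiber product in $E$ of $\eta_S$ against the underlying map of sheaves of sets of $D_j \to A[S]$; this is an object of $E = E^\ast$ equipped with a projection $S_j \to S$. By adjunction the second projection $S_j \to D_j$ extends to an algebra map $A[S_j] \to D_j$, and I would check on generators that the composite $A[S_j] \to D_j \to A[S]$ equals the map $A[S_j] \to A[S]$ induced by $S_j \to S$: both send a generator $(s,d) \in S_j$ to the common image of $s$ and $d$ in $A[S]$, which agree by definition of the fiber product. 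Hence each $A[S_j] \to A[S]$ factors through $D_j \to A[S]$, so $\{A[S_j]\to A[S]\}$ refines $\{D_j \to A[S]\}$ index by index.

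It then remains to show $\{S_j \to S\}$ is a covering family of $S$ in $E^\ast$. Given a finite set $\Lambda \subseteq \Gamma(U, S)$ of sections over an open $U$, I would view $\Lambda$ as sections of $A[S]$ via $\eta_S$ and apply the covering property of $\{D_j \to A[S]\}$: there is a \emph{single} index $j$ and an epimorphism $V \to U$ such that each $\lambda \in \Lambda$ lifts to a section $\tilde\lambda \in \Gamma(V, D_j)$ mapping to $\lambda$ in $A[S]$. Since $\lambda|_V$ is itself a section of $S$ and $\tilde\lambda$ has the same image in $A[S]$, the pair $(\lambda|_V, \tilde\lambda)$ is a section of $S_j = S \times_{A[S]} D_j$ over $V$ lifting $\lambda$. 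Thus $\Lambda$ lifts to $\Gamma(V, S_j)$ for a single $j$, which is exactly the covering condition of Definition \ref{defn:wisetop}. Combining the two steps yields cocontinuity.

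The main obstacle I anticipate is not the construction itself but confirming it interacts correctly with the unusual topology: because coverings in $E^\ast$ and $\Aalg$ require a single member of the family to absorb each finite set of sections (rather than a joint lift across members), the delicate point is that the index $j$ furnished by the cover $\{D_j\}$ can be reused verbatim to lift $\Lambda$ into $S_j$. Keeping the fiber product genuinely in $E$ -- so that its sections are literally compatible pairs -- is what makes this matching automatic and sidesteps the failure of $A[-]$ to commute with fiber products recorded in Example \ref{freeproductnoiso}.
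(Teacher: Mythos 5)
Your proof is correct. It shares its key step with the paper's own argument, but is genuinely more explicit about what cocontinuity demands. The paper's proof is three sentences: it views a finite set $\Lambda \subseteq \Gamma(U, T)$ of sections of $T$ as a finite set of sections of $A[T]$ via the unit, invokes the covering property of $\{B_i \to A[T]\}$ in the topology of Definition \ref{defn:wisetop}, and concludes that the locus over $U$ where $\Lambda$ lifts to a single $B_i$ is a cover of $U$ --- leaving implicit the actual covering family of $T$ in $E^\ast$ whose image under $A[-]$ refines $\{B_i\}$, which is what the definition of cocontinuity literally requires. Your fiber-product construction $S_j = S \times_{A[S]} D_j$ supplies exactly that missing packaging: it is a canonical object of $E$ (one member per index $j$, independent of $U$, $\Lambda$, and the epimorphism $V \to U$), the factorization $A[S_j] \to D_j \to A[S]$ of $A[S_j] \to A[S]$ follows from the free--forgetful adjunction by comparing the two algebra maps on generators, and since sections of a fiber product of sheaves are literally compatible pairs, the single-index lifting datum furnished by the cover in $\Aalg$ converts verbatim into a lift of $\Lambda$ into $S_j$. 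What your route buys is a complete, definition-level verification with an explicit refining cover; what the paper's route buys is brevity, recording only the section-lifting transfer, which is the real content, and treating the assembly of the cover as routine. You are also right on the conceptual point: cocontinuity needs only cover-lifting, so the failure of left exactness of $A[-]$ exhibited in Example \ref{freeproductnoiso} is no obstacle here.
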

\begin{proof}
	Suppose that $T$ is an object of $E$ and let $\{ B_i \to A[T] \}$ be a covering family in $\Aalg$.  If $U$ is an object of $E$ and $\Lambda \subset \Gamma(U, T)$ is a finite set of sections of $T$ over $U$ then we can also view $\Lambda$ as a finite set of sections of $A[T]$ over $U$.  By definition of the topology of $\Aalg$, the $U'$ over $U$ for which $\Lambda \big|_{U'}$ lifts to some $B_i$ form a cover of $U$.
\end{proof}

Write $f : E \to \Aalg$ for the morphism of sites induced from the cover-lifting $A[-]$. The pullback $f^*$ is
\[f^* B(U) = B(A[U]):\]
We don't have to sheafify because $A[-]$ is also cover preserving \cite[00XR]{stacks-project}.

\subsection{\v Cech Cohomology}

This section proves \cite[Corollary, pg. 190]{wisealgs1}. The rest of the proof of Theorem \ref{thm:algcohomisexal} is unchanged. 

Fix $S \in E$ and let $J$ be an injective $A[S]$-module throughout this section. Write $\dd{J}$ the sheaf of $A$-derivations valued in $J$. For an object $X$, we also write $X$ for the sheaf $\hh{X}$ it represents.  We continue to write $f : E \to \Aalg$ for the morphism of sites induced by the cocontinuous functor sending $S$ to $A[S]$, as described Section~\ref{sec:limits-of-free-alg}.

\begin{theorem}[{\cite[Corollary, pg. 190]{wisealgs1}}]\label{thm:injdervanishing}

The higher cohomology of $\dd{J}$ on $\Aalg/A[S]$ vanishes:
\[H^p(\Aalg/A[S], \dd{J}) = 0 \qquad p \neq 0.\]

\end{theorem}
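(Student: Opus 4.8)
The plan is to compute $H^\ast(\Aalg/A[S], \dd{J})$ by Čech cohomology against a hypercover built entirely out of free algebras, where $\dd{J}$ becomes transparent and the injectivity of $J$ forces the higher terms to vanish.

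First I would produce the cover. Writing $C = A[S]$ and $U$ for the forgetful functor to $E$, the counit $A[UC] \to C$ of the free--forgetful adjunction $A[-] \dashv U$ is a covering for the topology of Definition~\ref{defn:wisetop}: any finite set of sections of $C$ lifts to the polynomial generators. Iterating the comonad $A[U(-)]$ yields an augmented simplicial algebra $T_\bullet \to C$ with $T_n = A[W_n]$ for $W_n = U T_{n-1}$, in which every term is a free algebra.

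The key step is to verify that $T_\bullet \to C$ is a hypercover. The augmentation and the face maps are covers, so the content is that the matching (coskeleton) maps $T_n \to (\cosk_{n-1} T)_n$ are covers. These maps are exactly the comparison maps from a free algebra on an iterated fibre product to the corresponding fibre product of free algebras; by Proposition~\ref{coveringfreeproduct} such maps are covering. This is the decisive point, and precisely where the flaw of Example~\ref{freeproductnoiso} is repaired: $A[-]$ is not left exact, so these maps are not isomorphisms, but covering is all a hypercover requires. With the hypercover in hand, Verdier's theorem --- together with cofinality of free hypercovers, which follows from the morphism of sites $f \colon E \to \Aalg$ since every cover in $\Aalg/C$ is refined by one induced along $A[-]$ --- shows that $H^\ast(\Aalg/A[S], \dd{J})$ is computed by the cochain complex $\dd{J}(T_\bullet)$.

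Finally I would evaluate and conclude. Because each $T_n = A[W_n]$ is free, a derivation is determined freely on generators, so $\dd{J}(T_n) = \Der_A(A[W_n], J) = \Hom_E(W_n, J)$, the $J$-valued functions on the sheaf of generators. The augmented simplicial sheaf of sets $W_\bullet \to UC$ underlying the comonad resolution carries an extra degeneracy, namely the unit of the adjunction, and is therefore split, hence contractible. Viewing $\Hom_E(W_\bullet, J)$ as $\Hom_{A[S]}(A[S](W_\bullet), J)$ for the free $A[S]$-modules on $W_\bullet$, the augmented complex $A[S](W_\bullet) \to A[S](UC)$ is split exact, and since $J$ is injective $\Hom_{A[S]}(-, J)$ is exact; thus $\dd{J}(T_\bullet)$ is acyclic in positive degrees with $H^0 = \Der_A(A[S], J)$. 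This gives $H^p(\Aalg/A[S], \dd{J}) = 0$ for $p \neq 0$.

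The main obstacle is the hypercover verification: confirming that the coskeleton maps of the comonad resolution are covering reduces, degree by degree, to Proposition~\ref{coveringfreeproduct} and its iterates, and one must check that the discrepancy between $A[-]$ of a fibre product and the fibre product of the $A[-]$ is always covering rather than merely relating the two terms. A secondary point requiring care is justifying that this single free hypercover computes derived cohomology --- i.e. cofinality in Verdier's hypercovering theorem --- rather than only mapping to it.
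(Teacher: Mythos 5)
There is a genuine gap, and it sits exactly at the step you yourself flag as decisive. The matching maps of the comonad resolution are \emph{not} instances of Proposition \ref{coveringfreeproduct}. Already in degree one the matching map is
\[T_1 = A[UT_0] \longrightarrow T_0 \times_C T_0 = A[UC] \times_{A[S]} A[UC],\]
where $U$ is the forgetful functor and both maps in the fibre product are counits $\epsilon : A[UC] \to A[S]$. The source is the free algebra on the underlying \emph{sheaf of an algebra}, not on a fibre product of sheaves, and the target is a fibre product of free algebras along maps that are not of the form $A[g]$ for any morphism $g$ in $E$ (the counit sends a generator $[c]$ to the element $c \in A[S]$, which is not a generator). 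Proposition \ref{coveringfreeproduct} requires both features: epimorphisms $Q, R \to S$ \emph{in} $E$ and the induced maps $A[Q], A[R] \to A[S]$. In higher degrees the matching objects moreover involve limits mixing products with equalizer-type conditions, which $A[-]$ does not control (Example \ref{ex:freealgequalizers}). Separately, Verdier's theorem expresses cohomology as a colimit over \emph{all} hypercovers; a single hypercover does not compute cohomology, so you need cofinality, and your justification (covers of $A[S]$ are refined by induced covers) does not yield refinements of \emph{hypercovers}: constructing those runs through matching objects in $\Aalg$, where the failure of left exactness of $A[-]$ (Example \ref{freeproductnoiso}) is precisely the obstruction.

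In fact this strategy cannot be repaired, because your complex computes the wrong answer. Since $C = A[S]$ is free, $A[\eta_S] : A[S] \to A[UA[S]]$ is a section of the counit, and the induced extra degeneracies $G^{n+1}A[\eta_S]$ split the comonad resolution $T_\bullet \to C$ \emph{in} $\Aalg$, not merely after applying $U$. Hence $\dd{J}(T_\bullet)$ is contractible for \emph{every} $A[S]$-module $J$; note that your own use of injectivity is vacuous, since any additive functor $\Hom_{A[S]}(-, J)$ carries a split exact complex to an exact one. But in a general topos $H^p(\Aalg/A[S], \dd{M}) = \Ext^p_{A[S]}(\Omega_{A[S]/A}, M) = H^p(S, M)$, which is nonzero for non-injective $M$ because the free module $\Omega_{A[S]/A}$ on the sheaf $S$ is not projective --- this is the point of the paper's remark quoting \cite[V.2.1.2]{SGA4}, and it is the only reason the theorem needs $J$ injective at all. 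So the comonad complex cannot compute $H^*(\Aalg/A[S], \dd{J})$, and the hypercover/cofinality claim must fail somewhere. The paper's proof is engineered to avoid exactly this: it never asserts that any levelwise-free simplicial algebra is a hypercover. It takes the honest \v Cech nerve $\scr V_\bullet$ of a cover $\{A[T_i] \to A[S]\}$ in $\Aalg$ (so that vanishing of \v Cech cohomology of covers suffices), maps $A[\scr U_\bullet] \to \scr V_\bullet$ into it by the levelwise covering maps of Proposition \ref{coveringfreeproduct}, and injectivity of $J$ enters essentially in the exactness of the top row of \eqref{eqn:cechcomparison} (the \v Cech complex of $J$ on a cover of $S$ in $E$); the cartesian squares coming from Lemma \ref{lem:cechpushout} then transport that exactness down to $\dd{J}(\scr V_\bullet)$.
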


\begin{lemma}
The pullback $f^*\dd{J}$ of derivations to $E$ is represented by $J$:
\[f^* \dd{J} = J.\]
\end{lemma}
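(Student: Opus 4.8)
The plan is to compute the pullback $f^* \dd{J}$ directly from the formula $f^* B(U) = B(A[U])$ established just above, where $f : E \to \Aalg$ is the morphism of sites induced by the cover-lifting free algebra functor $A[-]$. By definition $\dd{J}(C) = \Der_A(C, J)$ for an $A[S]$-algebra $C$, so evaluating the pullback at an object $U \in E$ gives
\[
f^* \dd{J}(U) = \dd{J}(A[U]) = \Der_A(A[U], J).
\]
The heart of the proof is therefore the universal property of the free algebra: an $A$-derivation out of the free $A$-algebra $A[U]$ is determined freely by where it sends the generators $U$, and since $J$ is an $A[S]$-module (hence in particular an $A$-module), such derivations are in natural bijection with module maps, or more simply with sections of $J$ over $U$. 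Concretely I would argue $\Der_A(A[U], J) \cong \Hom_{Sets}(U, J) = J(U)$, naturally in $U$, which is exactly the statement that $f^*\dd{J}$ is represented by the sheaf $J$.

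First I would recall the universal property: for any $A$-algebra $C$ and $C$-module $J$, an $A$-derivation $C \to J$ corresponds to an $A$-algebra map $C \to C + \epsilon J$ splitting the projection (the trivial squarezero extension). Applying this with $C = A[U]$, an $A$-algebra map $A[U] \to A[U] + \epsilon J$ over the identity is freely specified by the images of the generators $u \in U$, and the derivation condition forces each generator to map to $u + \epsilon \theta(u)$ for an arbitrary section $\theta(u) \in J$. This is where the $A[S]$-module structure on $J$ enters: one must check that $J$, viewed merely as an $A$-module via $A \to A[S]$, still receives the derivation values, and that the $A$-linearity and Leibniz rule impose no further constraint beyond freely choosing $\theta$ on generators. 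Thus $\Der_A(A[U], J) \cong \HHom_{E}(U, J)$, where on the right $J$ denotes the underlying sheaf of sets (equivalently the $A$-module $J$ tested against the representable $U$).

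The remaining step is to verify naturality in $U$ so that this bijection is an isomorphism of sheaves, not just a pointwise bijection. Given $U' \to U$ in $E$, the induced map $A[U'] \to A[U]$ is the free algebra functor applied to the arrow, and precomposition of derivations matches restriction of sections of $J$; this is a routine check using functoriality of $A[-]$ and the universal property, so I would state it and move on. I would also note that because $A[-]$ is cocontinuous and cover-preserving, no separate sheafification is needed, consistent with the remark that $f^* B(U) = B(A[U])$ holds on the nose.

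The main obstacle I anticipate is bookkeeping around the two module structures on $J$: $J$ is given as an injective $A[S]$-module, but the derivations are $A$-derivations, so I must be careful that the identification uses only the $A$-module structure while the $A[S]$-module structure is what will matter later (for injectivity and the vanishing Theorem \ref{thm:injdervanishing}). The identification $f^*\dd{J} = J$ itself should hold at the level of the underlying $A$-module, and I would make explicit that the representing object $J$ is the sheaf $\hh{J}$ with its $A$-module structure, leaving the richer $A[S]$-structure to be exploited in the subsequent cohomology vanishing argument rather than in this lemma.
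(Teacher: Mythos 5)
Your proof is correct and takes essentially the same route as the paper: both evaluate $f^*\dd{J}$ via $f^*\dd{J}(T) = \Der_A(A[T], J)$ and then combine the characterization of $A$-derivations as algebra maps into a trivial square-zero extension with the free--forgetful adjunction to conclude $\Der_A(A[T], J) \simeq \Hom_E(T, J)$. One bookkeeping caveat: the evaluation must be at objects $T \in E_{/S}$ (as the paper does), not arbitrary $U \in E$, since the structure map $A[U] \to A[S]$ is what makes $J$ an $A[U]$-module — this structure is needed for the Leibniz rule and for $A[U] + \epsilon J$ to be a ring, so your closing remark that the identification uses ``only the $A$-module structure'' is slightly off.
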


\begin{proof}
Consider its value on some $T \in E_{/S}$: 
\[\begin{split}
    (f^* \dd{J})(T)      &:= \dd{J}(A[T])       \\
            &:= \Der_A(A[T], J)     \\
	    &\simeq \Hom_{\Aalg/A[S]}(A[T], A[S] + \epsilon J) \\
            &\simeq \Hom_{E}(T, J)
\end{split}\]
	The $A$-algebra $A + \epsilon J$ is the trivial square-zero extension of $A$ by $J$.  Its map to $A[S]$ sends $\epsilon J$ to $0$.
\end{proof}

\begin{proof}[Proof of Theorem \ref{thm:injdervanishing}]

It suffices to show \v Cech cohomology vanishes.

Let $\scr U = \{T_i \to S\}$ be a cover in $E$ and write $\scr U_\bullet$ for the \v Cech hypercover
\[\scr U_p = \bigsqcup T_{i_0} \times_S \cdots \times_S T_{i_p}.\]
Let $\scr V = \{A[T_i] \to A[S]\}$ be the associated cover of free algebras and $\scr V_\bullet$ the \v Cech hypercover. 

We consider the map $A[\scr U_\bullet] \to \scr V_\bullet$ given on $p$-simplices by the disjoint union of the maps
\begin{equation}\label{eqn:mapsbetweencechs}
A[T_{i_0} \times_S \cdots \times_S T_{i_p}] \longrightarrow A[T_{i_0}] \times_{A[S]} \cdots \times_{A[S]} A[T_{i_p}]    
\end{equation}
	of representable sheaves on $\Aalg$.  These maps are all covering by inductive application of Proposition~\ref{coveringfreeproduct}.

We show the \v Cech cohomology of $\dd{J}$ vanishes on $\scr V_\bullet$ by comparing with $\dd{J}$ on $A[\scr U_\bullet]$:
\begin{equation}\label{eqn:cechcomparison}
\begin{tikzcd}
\cdots &\dd{J}(A[\scr U_{p+1}]) \ar[l]     &\dd{J}(A[\scr U_{p}]) \ar[l, "\partial^p", swap]   &\dd{J}(A[\scr U_{p-1}]) \ar[l, "\partial^{p-1}", swap]    &\cdots \ar[l]       \\
\cdots &\dd{J}(\scr V_{p+1}) \ar[u, hook] \ar[l]       &\dd{J}(\scr V_{p}) \ar[u, hook, "t"] \ar[l, "\partial^p"]       &\dd{J}(\scr V_{p-1}) \ar[u, hook] \ar[l, "\partial^{p-1}"]       &\cdots \ar[l]
\end{tikzcd}    
\end{equation}
The chain maps $\partial^p$ come from alternating sums of face maps for the simplicial hypercovers. The vertical maps are injective because the maps \eqref{eqn:mapsbetweencechs} are covers. The top row is exact because it computes the \v Cech cohomology of the injective sheaf $f^* \dd{J} = J$ on $\scr U_\bullet$. 

Given a section $\alpha \in \dd{J}(\scr V_p)$ that maps to zero under $\partial^p$, the section $t \alpha$ is in the image of $\partial^{p-1}$ by exactness of the top row. If $\partial^{p-1} \beta = t \alpha$, we argue that $\beta$ lies in $\dd{J}(\scr V_{p-1})$. Then $\beta$ maps to $\alpha$.

\begin{proposition}\label{rmk:toposepipushout}

Let $X, Y \to Z$ be epimorphisms in a topos. The pullback square
\[\begin{tikzcd}
X \times_Z Y \ar[r] \ar[d] \pb \ar[dr, phantom, very near end, "\lrcorner"]       &X \ar[d]      \\
Y \ar[r]       &Z
\end{tikzcd}\]
is also a pushout.
\end{proposition}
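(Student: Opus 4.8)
The plan is to verify the universal property of the pushout directly, using two standard exactness features of a topos: epimorphisms are stable under pullback, and every epimorphism is effective (it is the coequalizer of its own kernel pair). Write $p : X \to Z$ and $q : Y \to Z$ for the two epimorphisms, $\pr_X, \pr_Y$ for the projections out of $X \times_Z Y$, and suppose given an object $W$ together with maps $f : X \to W$ and $g : Y \to W$ satisfying the cocone condition $f \circ \pr_X = g \circ \pr_Y$. I must produce a unique $h : Z \to W$ with $h \circ p = f$ and $h \circ q = g$.

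Uniqueness is immediate: since $p$ is an epimorphism, any such $h$ is determined by the equation $h \circ p = f$. For existence, the key observation is that $p$ is effective, so $Z = \coker(X \times_Z X \rightrightarrows X)$ for the two projections $\pr_1, \pr_2$ of the kernel pair; hence $f$ factors (necessarily uniquely) as $h \circ p$ precisely when $f \circ \pr_1 = f \circ \pr_2$.

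The heart of the argument is to deduce $f \circ \pr_1 = f \circ \pr_2$ from the cocone condition, and this is where the second epimorphism $q$ is used. First I would form the triple fiber product $X \times_Z X \times_Z Y$; since $q$ is epi and epis are pullback-stable, the projection $\alpha : X \times_Z X \times_Z Y \to X \times_Z X$ is an epimorphism, so it suffices to check $f \circ \pr_1 \circ \alpha = f \circ \pr_2 \circ \alpha$. Writing $\pi_1, \pi_2, \pi_3$ for the three projections of the triple product, the pairs $(\pi_1, \pi_3)$ and $(\pi_2, \pi_3)$ land in $X \times_Z Y$; composing the cocone condition $f \circ \pr_X = g \circ \pr_Y$ with each of them yields $f \circ \pi_1 = g \circ \pi_3$ and $f \circ \pi_2 = g \circ \pi_3$, whence $f \circ \pi_1 = f \circ \pi_2$. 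Cancelling the epimorphism $\alpha$ gives $f \circ \pr_1 = f \circ \pr_2$, producing the desired $h$ with $h \circ p = f$.

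It remains to check $h \circ q = g$. Since $p \circ \pr_X = q \circ \pr_Y$, I compute $h \circ q \circ \pr_Y = h \circ p \circ \pr_X = f \circ \pr_X = g \circ \pr_Y$; as $\pr_Y$ is an epimorphism (again the pullback of $p$ along $q$, by pullback-stability of epis), I may cancel it to conclude $h \circ q = g$. The main obstacle is the middle step, translating the cocone condition into the kernel-pair condition, and it is resolved entirely by the epimorphism $q$ together with pullback-stability of epis; everything else is formal manipulation with effective epimorphisms. An alternative phrasing would run the same computation in the internal Kripke--Joyal logic of the topos, where the triple-product step reads as the familiar element chase $f(x_1) = g(y) = f(x_2)$ for a generalized point $(x_1, x_2, y)$ of $X \times_Z X \times_Z Y$.
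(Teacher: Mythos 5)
Your proof is correct and takes essentially the same route as the paper's: both arguments combine effectiveness of epimorphisms with pullback-stability, passing through the triple fiber product $X \times_Z X \times_Z Y$ to show that $f$ equalizes the kernel pair of $X \to Z$ and then descending. The only cosmetic difference is at the end, where the paper descends $g$ as well and identifies the two resulting maps $Z \to F$ on the cover $X \times_Z Y \to Z$, whereas you verify $h \circ q = g$ directly by cancelling the epimorphism $\mathrm{pr}_Y$.
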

\begin{proof}
	Suppose that $f : X \to F$ and $g : Y \to F$ are morphisms that agree on $X \mathop\times_Z Y$.  Since $Y \to Z$ is covering, the map $X \mathop\times_Z Y \to X \mathop\times_Z X$ is covering, and the two maps $fq, fp : X \mathop\times_Z X \to F$ agree.  Epimorphisms are effective in a topos, so this implies that $f$ descends to a morphism $h : Z \to F$.  A similar argument shows that $g$ descends to $h' : Z \to F$.  But $h$ and $h'$ agree on the cover $X \mathop\times_Z Y$ of $Z$, so we must have $h = h'$.
\end{proof}

\begin{lemma}\label{lem:cechpushout}
For each face map $d_j$, the following diagram of sheaves on $\Aalg$ is cocartesian:
\begin{equation} \label{eqn:3}
\begin{tikzcd}
A[\mathscr U_p] \ar[d] \ar[r, "d_j"] \ar[dr, phantom, very near end, "\lrcorner"]         &A[\scr U_{p-1}] \ar[d]        \\
\mathscr V_p \ar[r, "d_j", swap]        &\scr V_{p-1}
\end{tikzcd}
\end{equation}
\end{lemma}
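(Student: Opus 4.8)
The plan is to derive the statement from Proposition~\ref{rmk:toposepipushout} together with the elementary fact that a pushout is unchanged when the apex of its defining span is precomposed with an epimorphism. The key subtlety, and the reason one cannot apply Proposition~\ref{rmk:toposepipushout} to the square as written, is that this square is \emph{not} cartesian: the comparison map from $A[\scr U_p]$ to the genuine fibre product will only be a cover, precisely because $A[-]$ fails to preserve the mixed fibre product — the content of Example~\ref{freeproductnoiso}. So the pushout must be produced indirectly.

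First I would check that both maps into $\scr V_{p-1}$ are epimorphisms in $Sh(\Aalg)$. The vertical $v_{p-1}\colon A[\scr U_{p-1}]\to\scr V_{p-1}$ is a coproduct of the covering maps~\eqref{eqn:mapsbetweencechs}, hence a cover. The face map $d_j\colon\scr V_p\to\scr V_{p-1}$ is, on each summand, a projection $V\times_{A[S]}A[T_i]\to V$ obtained by base change of the cover $A[T_i]\to A[S]$; assembling over the forgotten index $i$ and the target tuples exhibits $d_j$ as a base change of $\scr V_0=\bigsqcup_i A[T_i]\to A[S]$, hence a cover. Proposition~\ref{rmk:toposepipushout} then makes the pullback square with corner $P:=\scr V_p\times_{\scr V_{p-1}}A[\scr U_{p-1}]$,
\[\begin{tikzcd}
P \ar[r]\ar[d] & A[\scr U_{p-1}] \ar[d, "v_{p-1}"] \\
\scr V_p \ar[r, "d_j"] & \scr V_{p-1},
\end{tikzcd}\]
cocartesian.

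Next I would identify $P$ and the canonical map $w\colon A[\scr U_p]\to P$. Since coproducts are universal and representables preserve fibre products, $P$ splits over the index tuples of $\scr V_{p-1}$: writing $W=T_{k_0}\times_S\cdots\times_S T_{k_{p-1}}$ for such a tuple, $V$ for the corresponding fibre product of free algebras, and $i$ for the forgotten index, the matching summand of $P$ is $(V\times_{A[S]}A[T_i])\times_V A[W]=A[W]\times_{A[S]}A[T_i]$. Thus $P=\bigsqcup A[W]\times_{A[S]}A[T_i]$ while $A[\scr U_p]=\bigsqcup A[W\times_S T_i]$, and $w$ is the coproduct of the covering maps $A[W\times_S T_i]\to A[W]\times_{A[S]}A[T_i]$ of Proposition~\ref{coveringfreeproduct}. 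A coproduct of covers is a cover, so $w$ is an epimorphism, and by construction the two projections of $w$ are $v_p$ and $d_j$.

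Finally I would invoke epimorphism-invariance of pushouts: for an epimorphism $w\colon A'\to A$ and a span $B\xleftarrow{g}A\xrightarrow{f}C$, a cocone $(h_B,h_C)$ has $h_Bg=h_Cf$ iff $h_B(gw)=h_C(fw)$, cancelling the epimorphism $w$; hence the pushout of the span, with its coprojections, is simultaneously the pushout of the precomposed span $B\xleftarrow{gw}A'\xrightarrow{fw}C$. Applying this to $w\colon A[\scr U_p]\twoheadrightarrow P$, the span of the lemma's square is the span of $P$ precomposed with $w$, while the structure maps $v_{p-1}$ and $d_j$ to $\scr V_{p-1}$ are unchanged; therefore the lemma's square is cocartesian. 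The only labour is the bookkeeping in identifying $P$ and its projections; the one conceptual obstacle is recognizing that the square is not a pullback, so that the cover $w$ must be interposed rather than appealing to Proposition~\ref{rmk:toposepipushout} directly.
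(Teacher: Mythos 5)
Your proof is correct and takes essentially the same route as the paper's: the paper also interposes the covering map of Proposition~\ref{coveringfreeproduct} between $A[\mathscr U_p]$ and the genuine fibre product (working summand-by-summand, with $A[\mathscr U_I] \to \mathscr V_I \times_{\mathscr V_{I'}} A[\mathscr U_{I'}]$), applies Proposition~\ref{rmk:toposepipushout} to that pullback of epimorphisms, and concludes by the same epimorphism-cancellation for pushouts, which the paper leaves implicit but you spell out. The only differences are bookkeeping: the paper splits the square into summands at the outset, whereas you split only when identifying the pullback $P$, and you additionally make explicit why both maps into $\mathscr V_{p-1}$ are covers.
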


\begin{proof}
	For any subset $I = \{ i_0, \ldots, i_p \}$ of the indexing set of the cover $\mathscr U = \{ T_i \}$, let us write $\mathscr U_I$ for $T_{i_0} \mathop\times_S \cdots \mathop\times_S T_{i_p}$ and $\mathscr V_I$ for $A[T_{i_0}] \mathop\times_{A[S]} \cdots \mathop\times_{A[S]} A[T_{i_p}]$.  Choose one index $j$ and let $I' = I \smallsetminus \{ i_j \}$.  Then Diagram~\eqref{eqn:3} is the disjoint union of Diagrams~\eqref{eqn:5} over all $I$: 
	\begin{equation} \label{eqn:5}
	\begin{tikzcd}
		A[\mathscr U_I] \ar[r] \ar[d] & A[\mathscr U_{I'}] \ar[d] \\
		\mathscr V_I \ar[r] & \mathscr V_{I'}
	\end{tikzcd}
	\end{equation}
	We wish to show that each diagram~\eqref{eqn:5} is cocartesian.

	By Proposition~\ref{coveringfreeproduct}, the following natural morphism is covering:
	\begin{equation*}
		A[\mathscr U_I] = A[T_{i_j} \mathop\times_S \mathscr U_{I'}] \to A[T_{i_j}] \mathop\times_{A[S]} A[\mathscr U_{I'}] = \mathscr V_I \mathop\times_{\mathscr V_{I'}} A[\mathscr U_{I'}]
	\end{equation*}
	Combined with Lemma~\ref{rmk:toposepipushout}, this observation shows that~\eqref{eqn:5} is cocartesian.
\end{proof}

Since $\dd{J}$ is a sheaf, Lemma~\ref{lem:cechpushout} implies that the squares in~\eqref{eqn:cechcomparison} are cartesian.  The diagram of associated normalized chain complexes is a direct summand of diagram~\eqref{eqn:cechcomparison}~\cite[\href{https://stacks.math.columbia.edu/tag/019I}{Tag 019I}, (3)]{stacks-project}.  Its squares are therefore cartesian as well.  Furthermore, the normalized chain complexes compute \v Cech cohomology~\cite[\href{https://stacks.math.columbia.edu/tag/019I}{Tag 019I}, (5)]{stacks-project}.

We may now complete the proof of Theorem~\ref{thm:injdervanishing}.  Suppose that $\alpha$ is a normalized cocyle in $\dd J(\mathscr V_p)$ representing a class in $H^p(\Aalg, \dd J)$.  Then $t(\alpha)$ is a normalized cocyle in $\dd J(A[\mathscr U_p]) = \Gamma(\mathscr U_p, J)$ representing a class in $H^p(S, J)$.  But $J$ is injective, so $t(\alpha)$ lifts to $\Gamma(\mathscr U_{p-1}, J) = \dd J(A[\mathscr U_{p-1}])$.  Since the normalized squares of~\eqref{eqn:cechcomparison} are cartesian, it follows that there is some $\beta \in \dd J(\mathscr V_{p-1})$ lifting $\alpha$, which implies that $\alpha$ represents $0$ in $H^p(\Aalg, \dd J)$.

\end{proof}

\begin{remark}

The proof of Theorem \ref{thm:injdervanishing} can be mimicked to provide another proof of \cite[Theorem 1.5]{mymoduledefms} for free modules $M = A(S)$. The case of a general $A$-module $M$ can be computed by \v Cech cohomology on a free simplicial resolution $P_\bullet \to M$ and reduced to this case. 

\end{remark}

\begin{remark}

More generally, suppose $F : \cal C \to \cal D$ is a cover-preserving and cover-lifting functor. Write $(f^*, f_*)$ for the morphism of sites $Sh(\cal C) \to Sh(\cal D)$ with
\[(f^* P)(S) = P(FS).\]

Suppose $F$ sends fiber products of epimorphisms to covers
\[F(Q_1 \times_S \cdots \times_S Q_p) \longrightarrow F(Q_1) \times_{F(S)} \cdots \times_{F(S)} F(Q_p)\]
as in Lemma \ref{coveringfreeproduct}. If $M$ is a sheaf of abelian groups on $\cal D$ such that $f^* M$ is injective and additive, then the higher cohomology vanishes:
\[H^p(Sh(\cal D)/f_! S, f^* M) = 0 \qquad p \neq 0.\]
The proof is the same.

\end{remark}

\bibliographystyle{alpha}
\bibliography{zbib}

\end{document}